\pgfplotsset{compat=newest}
\newtheorem{Theorem}{Theorem}[section]
\newtheorem*{Theorem*}{Theorem}
\newtheorem{Lemma}[Theorem]{Lemma}
\newtheorem{Corollary}[Theorem]{Corollary}
\newtheorem{Proposition}[Theorem]{Proposition}
\theoremstyle{definition}
\newtheorem{Assumption}{Assumption}[section]
\newtheorem{Remark}{Remark}[section]
\newtheorem*{Remark*}{Remark}
\numberwithin{equation}{section}
\newcommand{\R}{\mathbb{R}}
\newcommand{\Rn}{{\mathbb{R}^n}}
\newcommand*\patchAmsMathEnvironmentForLineno[1]{%
	\expandafter\let\csname old#1\expandafter\endcsname\csname #1\endcsname
	\expandafter\let\csname oldend#1\expandafter\endcsname\csname end#1\endcsname
	\renewenvironment{#1}%
	{\linenomath\csname old#1\endcsname}%
	{\csname oldend#1\endcsname\endlinenomath}}%
\newcommand*\patchBothAmsMathEnvironmentsForLineno[1]{%
	\patchAmsMathEnvironmentForLineno{#1}%
	\patchAmsMathEnvironmentForLineno{#1*}}%
\DeclareMathOperator{\divr}{div}
\let\Re\relax
\DeclareMathOperator{\Re}{\mathrm{Re}}
\newcommand{\dif}[1]{\,\mathrm{d}{#1}}
\newcommand{\df}{\mathrm{d}}
\newcommand{\nrm}[2][]{ \| {#2} \|_{#1}}
\newcommand{\agl}[1][\cdot]{ \langle {#1} \rangle}
\title[Non-timelike regularity lower order]{Regularity and energy of hyperbolic boundary value problems on non-timelike hypersurfaces with lower order terms}
\author{Shiqi Ma}
\address{School of Mathematics, Jilin University, Changchun, 130012, China}
\email{mashiqi@jlu.edu.cn, mashiqi01@gmail.com}
\begin{document}

\begin{abstract}
	
	We study second order hyperbolic equations with initial conditions, a nonhomogeneous Dirichlet boundary condition and a source term. We prove the solution possesses $H^1$ regularity on any piecewise $C^1$-smooth non-timelike hypersurfaces. We generalize the notion of energy to these hypersurfaces, and establish an estimate of the difference between square roots of energies on these hypersurfaces and on the initial plane where the time $t = 0$. The energy is shown to be conserved when the source term and the boundary datum are both zero. We also obtain an $L^2$ estimate for the normal derivative of the solution. We establish these results for $C^2$-smooth solutions first by using multiplier methods, then we go back to the original setting using approximation.

	\medskip

	\noindent{\bf Keywords:}~~Hyperbolic equations, regularity, non-timelike hypersurface, energy estimates, normal derivative.
	
	{\noindent{\bf 2020 Mathematics Subject Classification:}~~35L10, 35L20, 35B65, 35B38, 83A05}.
	
\end{abstract}

\maketitle

\section{Introduction} \label{sec:Intro-HR21}

Throughout the article we assume that $T$ is a fixed positive constant, $\Omega$ is an open domain in $\Rn~(n \geq 1)$ with $C^\infty$-smooth boundary.
$\Omega$ can be chosen to be unbounded or to be the whole space $\Rn$.
$A(x) = (a_{ij}(x)) \in C^{n+4}(\overline \Omega; \R^{n^2})$ is a real-valued symmetric $n \times n$ matrix function, and $q \in C_c^\infty(\overline \Omega;[0,+\infty))$ is a compactly supported potential with non-negative value. 
We focus on the following second order hyperbolic initial/boundary value problem,
\begin{equation} \label{eq:1-HR21}
	\left\{\begin{aligned}
	\partial_t^2 u - \nabla \cdot (A(x) \nabla u) + qu & = G && \text{in} \ Q:= \Omega \times (0,T), \\
	u & = f && \text{on} \ \Sigma := \partial \Omega \times (0,T), \\
	u = u_0,\,
	u_t & = u_1 && \text{on} \ \Omega \times \{t = 0\},
	\end{aligned}\right.
\end{equation}
under the condition
\begin{equation} \label{eq:1con-HR21}
	\left\{\begin{aligned}
	& G \in L^2(Q), \ f \in H^1(\Sigma) := L^2(0,T; H^1(\partial \Omega)) \cap H^1(0,T; L^2(\partial \Omega)), \\
	& u_0 \in H^1(\Omega), \ u_1 \in L^2(\Omega), \ u_0(x) = f(x,0) \ \text{on} \ \partial \Omega, \quad q \in C_c^{\lceil n/2 \rceil + 3}(\overline \Omega;[0,+\infty)).
	\end{aligned}\right.
\end{equation}
For the well-posedness of the solution, we also impose the following ellipticity condition on $A(x)$, namely, there exist two positive constants $c_1$ and $c_2$ such that for all $(x,\xi) \in \overline \Omega \times \mathbb C^n$,
\begin{equation} \label{eq:ellp-HR21}
	c_1 |\xi|^2 \leq |\xi|_A^2 := \overline \xi^T A(x) \xi \leq c_2 |\xi|^2.
\end{equation}

Existence and regularity results for the solution of \eqref{eq:1-HR21} under variant initial/boundary data can be found in the literature.
When $q = 0$, the existence of the unique solution $u \in C([0,T]; H^1(\Omega))$ of \eqref{eq:1-HR21} under the condition \eqref{eq:1con-HR21} is given in \cite[(3.5)]{llt1986non}.
In \cite{Las1981cos} the authors proved another existence result for $f \in L^2(0,T; L^2(\partial \Omega))$ and $G = 0$ by using the cosine operators technique, and they also showed the map from $f$ to $u$ is continuous.
Then, in \cite{Las1983reg} the same authors improved the regularity of $u$ from $L^2(0,T; L^2(\Omega))$ to $C([0,T]; L^2(\Omega))$.
In \cites{Saka1970mixedI, Saka1970mixedII} Sakamoto studied the problem \eqref{eq:1-HR21} with higher order regularities by using pseudo-differential operators.
The book \cite{Lions1972non} contains a comprehensive treatments of non-homogeneous boundary value problems, including hyperbolic equations.
See also \cites{Ava97sharp, las88lift} for related applications.

These studies mentioned above treat the solution $u$ as maps $[0,T] \to X$ where $X$ are function spaces defined in $\Omega$,
namely, the cylinder $Q$ are foliated horizontally and $u$ are defined in every horizontal slice $\Omega \times \{t = \tau\}$ for all $\tau \in [0,T]$.
In this article, we investigate properties of the solution of \eqref{eq:1-HR21}-\eqref{eq:1con-HR21} on \emph{non-timelike} hypersurfaces $\Gamma_S$ described by
\[
\Gamma_S := \{ (x, S(x)) \,;\, x \in \overline \Omega \}.
\]
Check Fig.~\ref{fig:geo0-HR21} as an example.
\begin{figure}[h]
	\tikzset{
		partial ellipse/.style args={#1:#2:#3}{
			insert path={+ (#1:#3) arc (#1:#2:#3)}
		}
	}
	\centering
	\begin{tikzpicture}[scale = 0.8, baseline=(current bounding box.center)]
		\pgfmathsetmacro{\LONG}{1.5}; 
		\pgfmathsetmacro{\SHORT}{0.3}; 
		\pgfmathsetmacro{\HEIGHT}{1.5}; 
		\pgfmathsetmacro{\LP}{1}; 
		\pgfmathsetmacro{\RP}{2}; 
		\coordinate (BL) at (-\LONG,0);
		\coordinate (BR) at (\LONG,0);
		\coordinate (TL) at (-\LONG,\LP);
		\coordinate (TR) at (\LONG,\RP);
		\draw[white] (0,\HEIGHT) ellipse ({\LONG} and {\SHORT});
		\draw[dashed] (\LONG,0) arc(0:180:{\LONG} and {\SHORT});
		\draw (-\LONG,0) arc(180:360:{\LONG} and {\SHORT});
		\draw (BL) -- (TL);
		\draw (BR) -- (TR);
		\draw (-\LONG,\LP) .. controls (-0.2*\LONG,0.7*\LP + 0.1*\RP) and (0.3*\LONG,0.1*\LP + 0.9*\RP) .. ({\LONG},\RP);
		\draw (-\LONG,\LP) .. controls (-0.7*\LONG,0.7*\LP + 0.4*\RP) and (0.1*\LONG,1.1*\RP) .. ({\LONG},\RP);
		\node at (-0.2*\LONG,1.0*\HEIGHT) {$\Gamma_S$};
		\node at (0,0) {$\Omega$};
	\end{tikzpicture}
	\caption{An example of the non-timelike hypersurfaces $\Gamma_S$.}
	\label{fig:geo0-HR21}
\end{figure}

When regarding $\Gamma_S$ as a submanifold of $Q$, the upward normal vector $\nu$ of $\Gamma_S$ is given by
\begin{equation} \label{eq:nu-HR21}
	\nu := (\nu_x, \nu_t) = \Big( \frac {-\nabla S} {\sqrt{1+|\nabla S|^2}}, \frac 1 {\sqrt{1+|\nabla S|^2}} \Big).
\end{equation}
We say a hypersurface is timelike (resp.~spacelike, lightlike) with respect to $A$ at a given point $p$ if and only if the normal vector $\nu$ at $p$ is spacelike (resp.~timelike, lightlike), namely, $p$ satisfies
\[
|\nu_x|_A > |\nu_t|, ~(\text{resp.}~ |\nu_x|_A < |\nu_t|, \ |\nu_x|_A = |\nu_t|).
\]
And the hypersurface is said to be timelike (resp.~lightlike, spacelike) if it is timelike (resp.~lightlike, spacelike) at every point.
In this article we restrict our attention only to non-timelike (i.e.~lightlike or spacelike) hypersurfaces.
Therefore, we put the following assumption.

\begin{Assumption} \label{asp:nS-HR21}
	$S \in C^1(\overline \Omega)$ piecewisely with $S(x) \in [0,T]$, and $\Gamma_S$ is a non-timelike hypersurface, i.e.~$|\nabla S(x)|_A \leq 1$ for all $x \in \overline \Omega$.
\end{Assumption}

Restrictions of the solution on these slanted hypersurfaces have already appeared in the literature.
In \cite{RakeshUhlmann}, the authors studied the equation $(\partial_t^2 - \Delta + q) U = 0$ incited by an incident wave $\delta(t - x_1)$.
When encountered with the potential $q$, the incident wave generates a scattered wave $u$ such that
\[
U(x,t) = \delta(t - x_1) + u(x,t) H(t - x_1)
\]
where $H$ is the Heaviside function.
\cite[Theorem 1]{RakeshUhlmann} proves that the scattered wave $u$ is also a solution of the equation $(\partial_t^2 - \Delta + q) u = 0$ with the following initial condition
\[
u(t,x',t) = -\frac 1 2 \int_{-\infty}^t q(s, x') \dif s, \quad \forall (x',t) \in \Rn.
\]
The expression above for $u(t,x',t)$ involves the restriction of $u$ on the lightlike hypersurface $\{ (t,x', t) \}$ in $\R^{n+1}$.
Similar situations also appeared in \cites{RakeshSalo2, RakeshSalo1, merono2020fixed}.

\subsection{Main results}

In this work we establish $H^1$ regularity and energy estimates for the solution of \eqref{eq:1-HR21}-\eqref{eq:1con-HR21} on any non-timelike hypersurfaces.
The energy is estimated not directly on the value of the energy itself, but on the \emph{difference between square roots of the energies} on the hypersurface and the energy at time $t = 0$.
The estimate of the difference is sharper than the estimate of the value of the energy itself, see Theorem \ref{thm:1H1-HR21} and the discussion afterwards for details.

To state the main results, we introduce several notations.
We define the energy $\mathcal E(u; \Gamma_S)$ of $u$ on $\Gamma_S$ whenever the following expression can be well-defined:
{\small \begin{equation} \label{eq:en-HR21}
	\mathcal E(u; \Gamma_S)
	:= \frac 1 2 \int_{\Omega} \big[ |\nabla \big( u(x, S(x)) \big)|_A^2 + (1 - |\nabla S(x)|_A^2) |u_t(x, S(x))|^2 + q(x)|u(x,S(x))|^2 \big] \dif x.
\end{equation}}
When $S(x) = \text{constant}$, \eqref{eq:en-HR21} coincides with the classical energy definition.
We simplify the energy at time $t=0$ as $\mathcal E(u,0)$,
\begin{equation} \label{eq:E0-HR21}
	\mathcal E(u,0) := \mathcal E(u; S(x) = 0) = \frac 1 2 \int_\Omega (|\nabla u_0|_A^2 + |u_1|^2 + q(x)|u_0|^2) \dif x.
\end{equation}
The first result involves \eqref{eq:1-HR21} underneath $\Gamma_S$, so let us introduce the following notations (see Fig.~\ref{fig:geo-HR21}):
\begin{equation*}
	\left\{\begin{aligned}
		& Q_\tau := \{(x,t) \,;\, x \in \Omega,\, \tau \leq t \leq S(x) \} \subset Q, &&
		\Sigma_\tau := \{(x,t) \,;\, x \in \partial \Omega,\, \tau \leq t \leq S(x) \} \subset \Sigma, \\
		& H_\tau := Q_0 \cap \{(x,\tau)\,;\, x \in \Omega \}, &&
		\Gamma_{S,\tau} := \{ (x, S(x)) \,;\, \tau \leq S(x) \leq T_2 \} \\
		& T_1 := \inf \{ S(x) \,;\, x \in \Omega\}, &&
		T_2 := \sup \{ S(x) \,;\, x \in \Omega\}.
	\end{aligned}\right.
\end{equation*}
Note that when $\tau \leq T_1$, $\Gamma_{S,\tau} = \Gamma_S$.
And when $\Gamma_S$ is not horizontal, $Q_0$ and $\Sigma_0$ will be strict subsets of $Q$ and $\Sigma$, respectively.
\begin{figure}[h]
	\tikzset{
		partial ellipse/.style args={#1:#2:#3}{
			insert path={+ (#1:#3) arc (#1:#2:#3)}
		}
	}
	\centering
	\begin{tikzpicture}[scale = 0.8, baseline=(current bounding box.center)]
		\pgfmathsetmacro{\LONG}{1.5}; 
		\pgfmathsetmacro{\SHORT}{0.3}; 
		\pgfmathsetmacro{\HEIGHT}{4}; 
		\pgfmathsetmacro{\LP}{1}; 
		\pgfmathsetmacro{\RP}{3}; 
		\coordinate (BL) at (-\LONG,0);
		\coordinate (BR) at (\LONG,0);
		\coordinate (TL) at (-\LONG,\HEIGHT);
		\coordinate (TR) at (\LONG,\HEIGHT);
		\draw (0,\HEIGHT) ellipse ({\LONG} and {\SHORT});
		\draw[dashed] (\LONG,0) arc(0:180:{\LONG} and {\SHORT});
		\draw (-\LONG,0) arc(180:360:{\LONG} and {\SHORT});
		\draw (BL) -- (TL);
		\draw (BR) -- (TR);
		\node[anchor=north west] at (-\SHORT, 0.8*\HEIGHT) {$Q$};
		\node[anchor=north west] at (\LONG, 0.8*\HEIGHT) {$\Sigma$};
		\node at (0,0) {$\Omega$};
	\end{tikzpicture}
	\quad \quad
	\begin{tikzpicture}[scale = 0.8, baseline=(current bounding box.center)]
		\pgfmathsetmacro{\LONG}{1.5}; 
		\pgfmathsetmacro{\SHORT}{0.3}; 
		\pgfmathsetmacro{\HEIGHT}{4}; 
		\pgfmathsetmacro{\LP}{1}; 
		\pgfmathsetmacro{\RP}{3}; 
		\coordinate (BL) at (-\LONG,0);
		\coordinate (BR) at (\LONG,0);
		\coordinate (TL) at (-\LONG,\LP);
		\coordinate (TR) at (\LONG,\RP);
		\draw[white] (0,\HEIGHT) ellipse ({\LONG} and {\SHORT});
		\draw[dashed] (\LONG,0) arc(0:180:{\LONG} and {\SHORT});
		\draw (-\LONG,0) arc(180:360:{\LONG} and {\SHORT});
		\draw[dashed] (0,\LP*1.6) [partial ellipse=0:130:{\LONG} and {\SHORT}];
		\draw (-0.63*\LONG,1.84*\LP) .. controls (-0.6*\LONG,1.6*\LP) and (-0.55*\LONG,1.5*\LP) .. (-0.41*\LONG,1.33*\LP);
		\draw (0,\LP*1.6) [partial ellipse=246:360:{\LONG} and {\SHORT}];
		\draw (BL) -- (TL);
		\draw (BR) -- (TR);
		\draw (-\LONG,\LP) .. controls (-0.2*\LONG,0.7*\LP + 0.1*\RP) and (0.3*\LONG,0.1*\LP + 0.9*\RP) .. ({\LONG},\RP);
		\draw (-\LONG,\LP) .. controls (-0.7*\LONG,0.7*\LP + 0.4*\RP) and (0.1*\LONG,1.1*\RP) .. ({\LONG},\RP);
		\node at (0.22*\LONG,0.78*\HEIGHT/2) {$H_\tau$};
		\node[anchor=north west] at (0.3*\LONG,0.65*\HEIGHT) {$Q_\tau$};
		\node at (-0.3*\LONG,1.1*\HEIGHT/2) {$\Gamma_{S,\tau}$};
		\node[anchor=north west] at (\LONG,0.65*\HEIGHT) {$\Sigma_\tau$};
		\node at (0,0) {$\Omega$};
		\draw[->] (0,1.15*\HEIGHT/2) -- (-1,\HEIGHT-0.4) node[anchor=east]{$\nu$};
		\node[anchor=east] at (-\LONG,\LP) {$T_1$};
		\node[anchor=west] at (\LONG,\RP) {$T_2$};
	\end{tikzpicture}
	\caption{Left: the cylinder $Q$. \quad Right: the part of $Q$ underneath $\Gamma_S$.}
	\label{fig:geo-HR21}
\end{figure}

In what follows we use the notation $\langle T \rangle := (1+|T|^2)^{1/2}$ for simplicity.
Throughout this work we reserve the notation $T_2$ as
\[
T_2 := \sup \{ S(x) \,;\, x \in \Omega \}.
\]

\begin{Theorem} \label{thm:1H1-HR21}
	Given Assumption \ref{asp:nS-HR21}, then in the system \eqref{eq:1-HR21}-\eqref{eq:1con-HR21},
	the restriction of the solution $u$ on $\Gamma_S$ is in $H^1(\Gamma_S)$, and $\mathcal E(u; \Gamma_S)$ is well-defined.
	Moreover, there hold
	\begin{equation} \label{eq:1eH2-HR21}
		|\sqrt{\mathcal E(u; \Gamma_S)} - \sqrt{\mathcal E(u,0)}|
		\leq C \agl[T_2]^{1/2} (\nrm[H^1(\Sigma_0)]{f} + \nrm[L^2(Q_0)]{G}).
	\end{equation}
	for some constant $C$ depending only on $A$ and the dimension $n$, and is independent of $\Omega$.
\end{Theorem}

\begin{Remark} \label{rem:1S0-HR21}
	When $\Gamma_S$ is not horizontal ($S(x) \neq$ constant), we have $Q_0 \subsetneqq Q$ and $\Sigma_0 \subsetneqq \Sigma$, so the estimate given in Theorem \ref{thm:1H1-HR21} only requires parts of the data $f$ and $G$.
	When $\Gamma_S$ is strictly spacelike, i.e.~$|\nabla S(x)|_A \leq C < 1$ for all $x \in \overline \Omega$ for some constant $C$, Theorem \ref{thm:1H1-HR21} implies $u_t |_{\Gamma_S} \in L^2(\Gamma_S)$ as well.
\end{Remark}


In many applications, it is more common to use energy rather than its square root, so we also present the following result which is a direct consequence of \eqref{eq:1eH2-HR21},
\begin{equation} \label{eq:1H1-HR21}
	\mathcal E(u; \Gamma_S)
	\leq C \mathcal E(u,0) + C \agl[T_2] (\nrm[H^1(\Sigma_0)]{f}^2 + \nrm[L^2(Q_0)]{G}^2).
\end{equation}
But readers should note that the estimate given in Theorem \ref{thm:1H1-HR21} is sharper than \eqref{eq:1H1-HR21}.

\smallskip

Energy can also be defined on different non-timelike hypersurfaces.
Denote $S_0(x) \equiv 0$ and $\Gamma_0 := \{ (x, S_0(x)) \,;\, x \in \overline \Omega \}$ so that $\Gamma_0 = \overline \Omega \times \{t = 0\}$.
And we define a family of hypersurfaces $\{\Gamma_\tau\}_{\tau > 0}$ (see Fig.~\ref{fig:Gtau-HR21}) by
\[
\Gamma_\tau := \{ (x, S_\tau(x)) \,;\, x \in \overline \Omega \},
\]
with $S_\tau$ satisfying the following requirement:
\begin{equation} \label{eq:Gat-HR21}
	S_\tau \in C^1(\overline \Omega) \text{~piecewisely}, \
	\text{such that for~} \tau_1 < \tau_2, \ S_{\tau_1}(x) \leq S_{\tau_2}(x) \ \forall x \in \overline \Omega.
\end{equation}

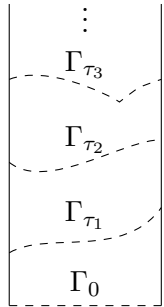
\begin{figure}[h]
	\centering
	\begin{tikzpicture}
	\draw (-1,0) -- (-1,4);
	\draw (1,0) -- (1,4);
	\draw[dashed] (-1,0) -- (1,0);
	\node[anchor = south] (0,0) {$\Gamma_0$};
	\draw[dashed] (-1,0.7) .. controls (-0.5, 1.0) and (0.5, 0.6) .. (1,1.3);
	\node at (0,1.2) {$\Gamma_{\tau_1}$};
	\draw[dashed] (-1,1.9) .. controls (-0.5, 1.5) and (0.5, 2.2) .. (1,2.2);
	\node at (0,2.2) {$\Gamma_{\tau_2}$};
	\draw[dashed] (-1,3) .. controls (-0.5, 3.2) and (0.3, 2.8) .. (0.45,2.7) .. controls (0.7,2.9) .. (1,3);
	\node at (0,3.2) {$\Gamma_{\tau_3}$};
	\node at (0,3.9) {$\vdots$};
	\end{tikzpicture}
	\caption{Non-timelike hypersurfaces}
	\label{fig:Gtau-HR21}
\end{figure}

\noindent If for every fixed $x \in \overline \Omega$, the function $S_\tau(x)$ is continuous with respect to $\tau$,
then the family $\{\Gamma_\tau\}_{\tau \geq 0}$ will form another foliation of $\overline Q$ comparing to the standard foliation $\Omega \times \{t = \tau\}$ with $\tau \in [0,T]$.
We abbreviate the corresponding energy on $\Gamma_\tau$ as $\mathcal E(u; \Gamma_\tau)$, namely,
\[
\mathcal E(u; \Gamma_\tau) := \mathcal E(u; \Gamma_{S_\tau}).
\]
By Theorem \ref{thm:1H1-HR21}, the following corollary about $\mathcal E(u; \Gamma_\tau)$ is an immediate result.

\begin{Corollary} \label{cor:1en-HR21}
	Given Assumption \ref{asp:nS-HR21},
	and assuming the family $\{\Gamma_\tau\}|_{\tau \geq 0}$ satisfies \eqref{eq:Gat-HR21},
	then in the system \eqref{eq:1-HR21}-\eqref{eq:1con-HR21},
	when $G = 0$ and $f = 0$, the energy $\mathcal E(u; \Gamma_\tau)$ is well-defined and is conserved, i.e.~
	\[
	\mathcal E(u; \Gamma_\tau)
	= \frac 1 2 \int_\Omega (|\nabla u_0|_A^2 + |u_1|^2 + q |u_0|^2) \dif x, \ \ \text{for} \ \ \tau \geq 0.
	\]
\end{Corollary}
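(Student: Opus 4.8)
The plan is to deduce the corollary directly from Theorem \ref{thm:1H1-HR21} by applying it separately to each hypersurface $\Gamma_\tau$, with $\Gamma_\tau$ playing the role of $\Gamma_S$. First I would observe that, under the standing hypotheses, each defining function $S_\tau$ is piecewise $C^1$ on $\overline\Omega$ with range in $[0,T]$ (from \eqref{eq:Gat-HR21}) and is non-timelike, i.e.~$|\nabla S_\tau(x)|_A \leq 1$ for all $x \in \overline\Omega$ (this is how I read Assumption \ref{asp:nS-HR21} applied to the individual members of the family). Hence for each fixed $\tau \geq 0$ the hypotheses of Theorem \ref{thm:1H1-HR21} hold with $S = S_\tau$, so $u|_{\Gamma_\tau} \in H^1(\Gamma_\tau)$ and the energy $\mathcal E(u; \Gamma_\tau)$ defined by \eqref{eq:tEn-HR21} is well-defined.

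The crucial feature I would exploit is that the upper bound in Theorem \ref{thm:1H1-HR21} factors through the quantity $\nrm[H^1(\Sigma_0)]{f} + \nrm[L^2(Q_0)]{G}$. Setting $f = 0$ and $G = 0$ makes this factor, and therefore the entire right-hand side, vanish. Specializing the theorem to $\Gamma_\tau$ then gives
\[
\Big| \mathcal E(u; \Gamma_\tau) - \int_\Omega (|\nabla u_0|_A^2 + |u_1|^2) \dif x \Big| \leq 0,
\]
and the equality $\mathcal E(u; \Gamma_\tau) = \int_\Omega (|\nabla u_0|_A^2 + |u_1|^2) \dif x$ follows for every $\tau \geq 0$.

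Because the constant $C$ in Theorem \ref{thm:1H1-HR21} depends only on $A$ and $n$, the conclusion holds uniformly across the foliation and no further estimate is needed to pass from one leaf to another. I do not anticipate a genuine obstacle here: this corollary is essentially a specialization of Theorem \ref{thm:1H1-HR21} to the case of vanishing source term and boundary datum. The only point requiring care is confirming that each $S_\tau$ individually satisfies the non-timelike condition, so that Theorem \ref{thm:1H1-HR21} may legitimately be invoked surface by surface; granting this, the well-definedness and the conservation identity both drop out at once.
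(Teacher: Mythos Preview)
Your proposal is correct and follows exactly the same approach as the paper: apply Theorem \ref{thm:1H1-HR21} to each $\Gamma_\tau$ with $f=0$ and $G=0$, so that the right-hand side vanishes and the desired equality follows for every $\tau$. The paper's proof is in fact even terser than yours; your extra care in verifying that each $S_\tau$ satisfies the non-timelike hypothesis is a reasonable precaution but does not constitute a different method.
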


Corollary \ref{cor:1en-HR21} generalizes the classical energy conservation law, which says the energy is conserved on every horizontal surface $\Omega \times \{t = \tau\}$.

\smallskip

We also obtain an estimate of the conormal derivative (with respect to $A$), 
\[
u_{\nu, A} := \nu_\Sigma \cdot A(x) \nabla u,
\]
where $\nu_\Sigma$ signifies the outer unit normal vector to $\Sigma$.

\begin{Theorem} \label{thm:2-HR21}
	Under the same assumptions as in Theorem \ref{thm:1H1-HR21}, we have
	\begin{equation*}
	\nrm[L^2(\Sigma_0)]{u_{\nu,A}}
	\leq C \langle T_2 \rangle^{1/2} \sqrt{\mathcal E(u,0)} + C \langle T_2 \rangle (\nrm[L^2(Q_0)]{G} + \nrm[H^1(\Sigma_0)]{f}),
	\end{equation*}
	for some constant $C$ depending only on $A$ and the dimension $n$.
\end{Theorem}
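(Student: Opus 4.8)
The plan is to prove the estimate first for $C^2$-smooth solutions by a Rellich-type multiplier identity, and then to recover the general case by the approximation procedure used elsewhere in the paper. Since $\partial\Omega$ is $C^\infty$-smooth, I would fix a vector field $h\in C^1(\overline\Omega;\Rn)$ with $h=\nu_\Sigma$ on $\partial\Omega$, and use the purely spatial multiplier $h\cdot\nabla u$. Multiplying $\partial_t^2 u - \divr(A\nabla u)=G$ by $\overline{h\cdot\nabla u}$, taking twice the real part, and integrating over the region $Q_0$ underneath $\Gamma_S$, the left-hand side rewrites as a spacetime divergence $\divr_{x,t}\mathcal V$ plus bulk terms quadratic in $(\nabla u,u_t)$ with coefficients built from $Dh$ and $DA$; here $\mathcal V$ is the spacetime field assembled from $u_t\,\overline{h\cdot\nabla u}$, $h\,|u_t|^2$, $(h\cdot\nabla u)\,\overline{A\nabla u}$ and $h\,(\nabla\bar u\cdot A\nabla u)$. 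Applying the divergence theorem over $Q_0$ converts the divergence into surface integrals over the three faces of $\partial Q_0$: the bottom $H_0=\Omega\times\{0\}$, the top $\Gamma_S$, and the lateral face $\Sigma_0$.

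Next I would analyze each boundary contribution. On $\Sigma_0$ the outward normal is $(\nu_\Sigma,0)$, so only the spatial part of $\mathcal V$ survives; using $h=\nu_\Sigma$ and splitting $\nabla u$ into its normal and tangential components on $\partial\Omega$, the surviving quadratic-in-normal-derivative piece is a term comparable to $\int_{\Sigma_0}|u_{\nu,A}|^2$, the comparability constants coming from Assumption \ref{asp:A-HR21}, while all remaining lateral terms involve only the tangential and time derivatives of $u|_\Sigma=f$ and are therefore bounded by $\nrm[H^1(\Sigma_0)]{f}^2$. On $H_0$ the normal is $(0,-1)$ and the integrand is controlled by $\nrm[L^2(\Omega)]{\nabla u_0}^2+\nrm[L^2(\Omega)]{u_1}^2$. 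On $\Gamma_S$ the normal is $\nu$ from \eqref{eq:nu-HR21}, and the crucial point is that the resulting integrand is dominated by the (possibly degenerate) energy $\mathcal E(u;\Gamma_S)$ together with the surface gradient of the trace $u|_{\Gamma_S}$, both finite and bounded by the data through Theorem \ref{thm:1H1-HR21}.

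It then remains to bound the bulk integral $\int_{Q_0}(|\nabla u|^2+|u_t|^2)$ and the source pairing $2\Re\int_{Q_0}G\,\overline{h\cdot\nabla u}$. The bulk term is handled by controlling a horizontal slice energy at each $t$ and integrating over an interval of length at most $T_2$, which is the source of the extra powers of $\agl[T_2]$; the source pairing is estimated by Cauchy--Schwarz and Young's inequality against the same bulk quantity and $\nrm[L^2(Q_0)]{G}^2$. Collecting all contributions, isolating $\nrm[L^2(\Sigma_0)]{u_{\nu,A}}^2$, and inserting the bound of Theorem \ref{thm:1H1-HR21} for the $\Gamma_S$- and interior-type terms produces the stated inequality with the claimed powers of $\agl[T_2]$. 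Finally, the density argument used in the rest of the paper transfers the estimate from $C^2$-smooth solutions to solutions merely satisfying \eqref{eq:1con-HR21}.

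I expect the main obstacle to be the top face $\Gamma_S$ in the \emph{lightlike} case, where $1-|\nabla S|_A^2=0$ so that $\mathcal E(u;\Gamma_S)$ no longer controls $u_t|_{\Gamma_S}$ in $L^2$. The key verification is that the particular combination of quadratic terms produced on $\Gamma_S$ by the multiplier $h\cdot\nabla u$ only involves quantities that remain controlled by the degenerate energy and the tangential gradient of the trace; concretely, that the non-timelike condition $|\nabla S|_A\le 1$ forces every dangerous $u_t$-contribution on $\Gamma_S$ to appear carrying precisely the degenerating weight $1-|\nabla S|_A^2$. Tracking the signs and weights of these surface terms, rather than the routine integrations by parts, is where the real work lies.
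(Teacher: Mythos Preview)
Your proposal is correct and follows essentially the same route as the paper: the multiplier identity \eqref{eq:uu3-HR21} integrated over $Q_0$, the lateral piece yielding the negative $\|u_{\nu,A}\|_{L^2(\Sigma_0)}^2$ term modulo tangential derivatives of $f$, the $\Gamma_S$ contribution absorbed by $\mathcal E(u;\Gamma_S)$ (this is exactly the computation \eqref{eq:igI21-HR21}--\eqref{eq:igI2-HR21}, and your anticipation that every $u_t$ on $\Gamma_S$ carries the weight $1-|\nabla S|_A^2$ is precisely what \eqref{eq:igI23-HR21} verifies), the bulk handled via \eqref{eq:0g12-HR21}, and finally the approximation of Section~\ref{sec:app-HR21}. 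The only technical difference is that the paper takes $\vec\varphi = A\nu_\Sigma$ on $\partial\Omega$ rather than your $h=\nu_\Sigma$, which makes the lateral boundary term come out directly as $|\nu_\Sigma|_A^2|\nabla u|_A^2 - 2|u_{\nu,A}|^2$ and is then handled by the dedicated decomposition Lemma~\ref{lem:nd-HR21}; your choice also works after expressing $\partial_\nu u$ in terms of $u_{\nu,A}$ and tangential data, but the algebra is slightly less clean.
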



Proofs of Theorems \ref{thm:1H1-HR21}, \ref{thm:2-HR21} and Corollary \ref{cor:1en-HR21} are presented in Section \ref{sec:app-HR21}.

\subsection{Motivation}

The classical result \cite[Remark 2.10]{llt1986non} says that the solution $u$ of \eqref{eq:1-HR21}-\eqref{eq:1con-HR21} satisfies
\[
u \in C([0,T]; H^1(\Omega)), \ \ \partial_t u \in C([0,T]; L^2(\Omega)),
\]
which implies
\begin{equation*}
	u \in H^1(Q).
\end{equation*}
Therefore, by the trace theorem, the restriction of $u$ to $\Gamma_S$ has regularity $H^{1/2}$.
However, when $\Gamma_S$ is horizontal, e.g.~when $\Gamma_S = \Omega \times \{t = \tau\}$ for some $\tau$, the restriction of $u$ on $\Gamma_S$ has $H^1$ regularity due to \cite[Remark 2.10]{llt1986non}.
From the point of view of the relativity of simultaneity in the theory of relativity \cite{wald84gen}, $\Omega \times \{t = \tau\}$ should not be more special than any other non-timelike hypersurface $\Gamma_S$.
Therefore, one would expect that the trace theorem for the solution $u$ is not sharp and $u$ shall also enjoy the same $H^1$ regularity on slanted $\Gamma_S$ as on $\Omega \times \{t = \tau\}$.
This is the motivation of this work.

\begin{figure}[h]
	\centering
	\begin{tikzpicture}
		\node[anchor=south east] at (0,0) {$O$};
		\draw[dotted] (-1,-1) -- (2,2);
		\node[anchor=south west] at (2,2) {$t = x_1$};
		\draw[->] (-1,0) -- (2,0);
		\node[anchor=west] at (2,0) {$x_1$};
		\draw[->] (0,-1) -- (0,2);
		\node[anchor=south] at (0,2) {$t$};
		\draw[dashed, ->] (-1,-0.3) -- (2,0.6);
		\node[anchor=west] at (2,0.6) {$\tilde x_1$};
		\draw[dashed, ->] (-0.3,-1) -- (0.6,2);
		\node[anchor=south] at (0.6,2) {$\tilde t$};
		\filldraw[color=black!100, fill=black!100, thin](3.3,0) circle (0.05);
		\node[anchor=east] at (3.3,0) {$X$};
		\filldraw[color=black!100, fill=black!50, thin](3.3,0.6) circle (0.05);
		\draw[dashed, ->] (3.35,0.6) -- (3.9,0.6);
		\node[anchor=east] at (3.3,0.6) {$Y$};
		\node[anchor=west] at (3.9,0.6) {$v$};
	\end{tikzpicture}
	\caption{The observers $X$ and $Y$ stand at the origin $O$ at the time $\tau = 0$. $X$ stands still while $Y$ possesses an instantaneous velocity $v$ pointing to the right.
	$\{t, x_1\}$ is the instantaneous coordinate of $X$ and $\{\tilde t, \tilde x_1\}$ is that of $Y$.
	The time and space axes are perpendicular to each other for both $\{t, x_1\}$ and $\{\tilde t, \tilde x_1\}$, under the Minkowski metric $-\df t^2 + \df x_1^2$.}
\label{fig:rel-HR21}
\end{figure}
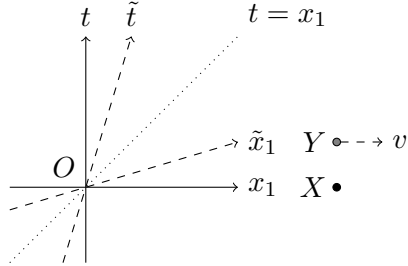

Let us explain the idea by a simplified example.
Assume there is a sound wave $u(x,t)$ propagating inside a domain $\Omega$ which satisfies the wave equation $(\partial_t^2 - \Delta) u = 0$, and the sound speed is normalized to $1$ in the medium, i.e.~$A(x) = 1$ in $\Omega$.
An observer $X$ is located inside $\Omega$ and he/she stands still relative to $\Omega$.
Another observer $Y$ is also located inside $\Omega$ but is moving in the direction of the first axis, say, $x_1$,
at a constant speed $v$ which is slower than that sound speed, $|v| < 1$.
See Fig.~\ref{fig:rel-HR21}.
We assume the $\Omega$ is large enough such that all the events mentioned here take place inside $\Omega \times [0,T]$ for $T$ large enough.
Then, from the perspective of $X$, the simultaneity at time $\tau$ is $\Omega \times \{t = \tau\}$,
while from the perspective of $Y$ the simultaneity is a slanted plane $\Gamma_S$ with $S(x) = vx$.
Let $(t, x)$ be the spacetime coordinate of $X$ and we denote $\gamma = (1 - v^2)^{-1/2}$ as the Lorentz factor.
According to the theory of relativity, the spacetime coordinate $(\tilde t, \tilde x)$ of $Y$ should satisfy
\[
\tilde t = \gamma (t - vx_1), \quad
\tilde x_1 = \gamma (x_1 - vt), \quad 
\tilde x_j = x_j~(j = 2, \cdots, n).
\]
The sound wave $\tilde{u}$ that $Y$ experienced should be $\tilde u(\tilde x, \tilde t) := u(x, t)$.
It can be checked that the wave equation is preserved under this Lorentz transformation, namely,
\[
(\partial_{\tilde t}^2 - \Delta_{\tilde x}) \tilde u(\tilde x, \tilde t)
= (\partial_{t}^2 - \Delta_{x}) u(x, t).
\]
This means that what $Y$ heard is also a wave which satisfies the same physical law with the sound that $X$ heard.
Theorem \ref{thm:1H1-HR21} tells us the profile of the sound (the wave shape across the space at a fixed time) that $Y$ heard has the same $H^1$ spacial regularity with the sound that $X$ heard.

\smallskip

This article is organized as follows.
In Section \ref{sec:pre-HR21} we make some preparations which are necessary for the subsequent analysis.
Section \ref{sec:est-HR21} is devoted to the proof of an intermediate result in which the solution is assumed to have $C^2$-smoothness.
Then the $C^2$-smoothness constraint is lifted in Section \ref{sec:app-HR21} by dealing with a compatibility issue and a regularity issue consecutively.
Finally, we prove the main results in Section \ref{sec:sqrt-HR21}.

\section{Some preparations} \label{sec:pre-HR21}

Throughout the article we denote by $C$ a generic constant whose value may varies from line to line.
We use $\divr_{x,t} (\vec a, b)$ to signify $\nabla \cdot \vec a + \partial_t b$.
The following lemma shall be used in Section \ref{sec:est-HR21} where the constraint $u \in C^2$ is stipulated.

\begin{Lemma} \label{lem:uu-HR21}
	Assume $u \in C^2$, $\vec \varphi(x,t) = (\varphi_1(x,t), \cdots, \varphi_n(x,t)) \in C^1(\R^{n+1}; \Rn)$ piecewisely and $q \in C^1(\Rn; \R)$, then at every $C^1$ continuous point of $\vec \varphi$, we have the following identities:
	\begin{align}
	2 \Re \{\overline{u}_t [\partial_t^2 u - & \nabla \cdot (A(x) \nabla u) + qu] \} \nonumber \\
	= & \, \Re \divr_{x,t} \big[ -2 \overline{u}_t A(x) \nabla u, |u_t|^2 + |\nabla u|_A^2 + q(x)|u|^2 \big], \label{eq:uu-HR21} \\
	2 \Re \{ (\vec \varphi \cdot \nabla \overline u) & [\partial_t^2 u - \nabla \cdot (A(x) \nabla u) + qu] \} \nonumber \\
	= & \, \Re \divr_{x,t} \big[ \vec \varphi (|\nabla u|_A^2 - |u_t|^2 + q|u|^2) - 2(\vec \varphi \cdot \nabla \overline u) A\nabla u, 2 (\vec \varphi \cdot \nabla \overline u) u_t \big] \nonumber \\
	& - ( \nabla \cdot \vec \varphi) (|\nabla u|_A^2 - |u_t|^2 + q|u|^2) - 2 \vec \varphi_t \cdot \Re ( \nabla \overline u u_t) - (\vec \varphi \cdot \nabla A)(\nabla u, \nabla u) \nonumber \\
	& + 2 (\partial_j \varphi_k) \Re (\overline u_k a_{jl} u_l) - (\nabla q \cdot \vec \varphi) |u|^2, \label{eq:uu3-HR21}
	\end{align}
	where $\Re$ stands for the real part, and the summation convention is called for the last term.
\end{Lemma}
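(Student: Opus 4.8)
The plan is to treat both formulas as pointwise differential identities and to establish each by expanding the claimed right-hand side with the Leibniz rule and then regrouping, rather than by any integration. The hypothesis $u \in C^2$ guarantees that the mixed second derivatives of $u$ commute, $\partial_j\partial_k u = \partial_k\partial_j u$, so every product-rule step below is licit, and working at a point where $\vec\varphi$ is $C^1$ ensures that $\nabla\cdot\vec\varphi$ and each $\partial_j\varphi_k$ are well defined there. Throughout I will use that $A$ is real and symmetric, which gives $|\nabla u|_A^2 = \overline{u}_j a_{jl} u_l$ and $\partial_t|\nabla u|_A^2 = 2\Re(\overline{\nabla u_t}\cdot A\nabla u)$.

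For \eqref{eq:uu-HR21} I would simply expand the divergence on the right. Writing $\partial_t(|u_t|^2 + |\nabla u|_A^2) = 2\Re(\overline{u}_t u_{tt}) + 2\Re(\overline{\nabla u_t}\cdot A\nabla u)$ and $\nabla\cdot(-2\overline{u}_t A\nabla u) = -2\overline{\nabla u_t}\cdot A\nabla u - 2\overline{u}_t\,\nabla\cdot(A\nabla u)$, the two copies of $2\Re(\overline{\nabla u_t}\cdot A\nabla u)$ cancel after taking the real part, and what survives is exactly $2\Re\{\overline{u}_t[\partial_t^2 u - \nabla\cdot(A\nabla u)]\}$. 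This is the short, purely computational half.

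The substance is in \eqref{eq:uu3-HR21}, which I would prove by splitting the left-hand side into its time part and its elliptic part. For the time part I write $2\Re\{(\vec\varphi\cdot\nabla\overline u)u_{tt}\} = \partial_t[2\Re((\vec\varphi\cdot\nabla\overline u)u_t)] - \vec\varphi\cdot\nabla|u_t|^2$, where the identity $2\Re[(\vec\varphi\cdot\nabla\overline{u_t})u_t] = \vec\varphi\cdot\nabla|u_t|^2$ is used, and then convert $\vec\varphi\cdot\nabla|u_t|^2 = \nabla\cdot(\vec\varphi|u_t|^2) - (\nabla\cdot\vec\varphi)|u_t|^2$; this already produces the time component $2(\vec\varphi\cdot\nabla\overline u)u_t$ inside the divergence, the spatial piece $-\nabla\cdot(\vec\varphi|u_t|^2)$, and the leftover $(\nabla\cdot\vec\varphi)|u_t|^2$. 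For the elliptic part I pass to index notation and start from $-2\Re\{\varphi_k\overline{u}_k\,\partial_j(a_{jl}u_l)\}$; moving the derivative off via $\varphi_k\overline{u}_k\,\partial_j(a_{jl}u_l) = \partial_j(\varphi_k\overline{u}_k a_{jl}u_l) - \partial_j(\varphi_k\overline{u}_k)a_{jl}u_l$ yields the divergence $-2\Re\,\nabla\cdot[(\vec\varphi\cdot\nabla\overline u)A\nabla u]$ together with the remainder $2\Re\{\partial_j(\varphi_k\overline{u}_k)a_{jl}u_l\}$.

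The crux is identifying this remainder. Expanding $\partial_j(\varphi_k\overline{u}_k) = (\partial_j\varphi_k)\overline{u}_k + \varphi_k\overline{u}_{jk}$ splits it into $2(\partial_j\varphi_k)\Re(\overline{u}_k a_{jl}u_l)$, which is already the last term of \eqref{eq:uu3-HR21}, plus $2\Re(\varphi_k a_{jl}\overline{u}_{jk}u_l)$. Here is where the genuine bookkeeping lies: using $u_{jk}=u_{kj}$ and $a_{jl}=a_{lj}$ one checks that $\partial_k|\nabla u|_A^2 = 2\Re(\overline{u}_{jk}a_{jl}u_l) + \overline{u}_j(\partial_k a_{jl})u_l$, so that $2\Re(\varphi_k a_{jl}\overline{u}_{jk}u_l) = \vec\varphi\cdot\nabla|\nabla u|_A^2 - (\vec\varphi\cdot\nabla A)(\nabla u,\nabla u)$, and then $\vec\varphi\cdot\nabla|\nabla u|_A^2 = \nabla\cdot(\vec\varphi|\nabla u|_A^2) - (\nabla\cdot\vec\varphi)|\nabla u|_A^2$ deposits the last divergence piece $\nabla\cdot(\vec\varphi|\nabla u|_A^2)$ and the leftovers $-(\nabla\cdot\vec\varphi)|\nabla u|_A^2$ and $-(\vec\varphi\cdot\nabla A)(\nabla u,\nabla u)$. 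Collecting all divergence contributions into $\Re\divr_{x,t}[\vec\varphi(|\nabla u|_A^2 - |u_t|^2) - 2(\vec\varphi\cdot\nabla\overline u)A\nabla u, 2(\vec\varphi\cdot\nabla\overline u)u_t]$ and all remainders into $(\nabla\cdot\vec\varphi)(|u_t|^2 - |\nabla u|_A^2) - (\vec\varphi\cdot\nabla A)(\nabla u,\nabla u) + 2(\partial_j\varphi_k)\Re(\overline{u}_k a_{jl}u_l)$ yields the claim. The one delicate point I would double-check most carefully is precisely that symmetrization of $2\Re(\varphi_k a_{jl}\overline{u}_{jk}u_l)$ into a gradient of $|\nabla u|_A^2$ modulo the $\vec\varphi\cdot\nabla A$ correction, since it is there that the symmetry of $A$, the equality of mixed partials, and the placement of the real part must all be handled together.
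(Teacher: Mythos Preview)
Your proposal is correct and follows essentially the same approach as the paper's proof: both split the second identity into time and elliptic parts, handle the time part via $\partial_t[2\Re((\vec\varphi\cdot\nabla\overline u)u_t)]$ and the product rule on $\vec\varphi\cdot\nabla|u_t|^2$, and reduce the elliptic part, after one integration by parts in index form, to identifying $2\Re(\varphi_k a_{jl}\overline{u}_{jk}u_l)$ with $\vec\varphi\cdot\nabla|\nabla u|_A^2 - (\vec\varphi\cdot\nabla A)(\nabla u,\nabla u)$ using the symmetry of $A$ and $u_{jk}=u_{kj}$. The only cosmetic difference is that for \eqref{eq:uu-HR21} you expand the right-hand side while the paper expands the left, which is of course equivalent.
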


\begin{proof}
	A straightforward computation shows
	\begin{equation*}
	2 \Re \{\overline{u}_t \partial_t^2 u \}
	= (|u_t|^2)', \quad
	2 \Re \{\overline{u}_t \nabla \cdot (A \nabla u) \}
	= 2 \Re \nabla \cdot (\overline{u}_t A \nabla u) - (|\nabla u|_A^2)',
	\end{equation*}
	\[
	2 \Re \{\overline{u}_t q u \}
	= 2 q \Re \{\overline{u}_t u \}
	= q \partial_t(|u|^2)
	\]
	which give \eqref{eq:uu-HR21}.

	Similarly, we can show
	\begin{align}
	& \, 2 \Re \{ (\vec \varphi \cdot \nabla \overline u) \partial_t^2 u \}
	= \Re \partial_t [ 2(\vec \varphi \cdot \nabla \overline u) u_t ] - \vec \varphi \cdot 2 \Re ( u_t \nabla \overline u_t) - 2 \vec \varphi_t \cdot \Re ( \nabla \overline u u_t) \nonumber \\
	= & \, \Re \partial_t [ 2(\vec \varphi \cdot \nabla \overline u) u_t ] - \vec \varphi \cdot \nabla (|u_t|^2) - 2 \vec \varphi_t \cdot \Re ( \nabla \overline u u_t) \nonumber \\
	= & \, \Re \partial_t [ 2(\vec \varphi \cdot \nabla \overline u) u_t ] - \nabla \cdot (\vec \varphi |u_t|^2) + (\nabla \cdot \vec \varphi) |u_t|^2 - 2 \vec \varphi_t \cdot \Re ( \nabla \overline u u_t). \label{eq:vpt-HR21}
	\end{align}
	$\vec \varphi$ is a real-valued vector function, so we also have
	\begin{align}
	& 2 \Re \{ (\vec \varphi \cdot \nabla \overline u) \nabla \cdot (A \nabla u) \}
	= \Re \nabla \cdot [ 2(\vec \varphi \cdot \nabla \overline u) A \nabla u ] - 2\Re [\nabla (\vec \varphi \cdot  \nabla \overline u) \cdot A \nabla u] \nonumber \\
	= & \Re \nabla \cdot [ 2(\vec \varphi \cdot \nabla \overline u) A \nabla u ] - 2\Re [(\partial_j \varphi_k) \overline u_k a_{jl} u_l] - 2\Re[\varphi_k \overline u_{jk} a_{jl} u_l]. \label{eq:npn1-HR21}
	\end{align}
	We compute ``$\varphi_k \overline u_{jk} a_{jl} u_l$'' as follows,
	\begin{align}
		\varphi_k \overline u_{jk} a_{jl} u_l
		& = \varphi_k \partial_k ( \overline u_j a_{jl} u_l ) - \varphi_k \overline u_j (\partial_k a_{jl}) u_l - \varphi_k \overline u_j a_{jl} u_{lk} \nonumber \\
		& = \vec \varphi \cdot \nabla ( |\nabla u|_A^2 ) - (\vec \varphi \cdot \nabla A)(\nabla u, \nabla u) - \varphi_k u_{jk} a_{lj} \overline u_l \nonumber \\
		& = \nabla \cdot (\vec \varphi  |\nabla u|_A^2 ) - (\nabla \cdot \vec \varphi) |\nabla u|_A^2 - (\vec \varphi \cdot \nabla A)(\nabla u, \nabla u) - \varphi_k u_{jk} a_{lj} \overline u_l. \label{eq:pjk-HR21}
	\end{align}
	Because $A$ is symmetric and $A$, $\vec \varphi$ are real-valued, we can conclude from \eqref{eq:pjk-HR21} that
	\begin{equation} \label{eq:Rpk-HR21}
		2\Re[ \varphi_k \overline u_{jk} a_{jl} u_l ]
		= \nabla \cdot (\vec \varphi  |\nabla u|_A^2 ) - (\nabla \cdot \vec \varphi) |\nabla u|_A^2 - (\vec \varphi \cdot \nabla A)(\nabla u, \nabla u).
	\end{equation}
	Combining \eqref{eq:npn1-HR21} and \eqref{eq:Rpk-HR21}, we arrive at
	\begin{align}
	2 \Re \{ (\vec \varphi \cdot \nabla \overline u) \nabla \cdot (A \nabla u) \}
	& = \Re \nabla \cdot [ 2(\vec \varphi \cdot \nabla \overline u) A \nabla u ] - 2 (\partial_j \varphi_k) \Re (\overline u_k a_{jl} u_l) \nonumber \\
	& \quad - \nabla \cdot (\vec \varphi  |\nabla u|_A^2 ) + (\nabla \cdot \vec \varphi) |\nabla u|_A^2 + (\vec \varphi \cdot \nabla A)(\nabla u, \nabla u). \label{eq:vpx-HR21}
	\end{align}
	Also,
	\begin{align}
		2 \Re \{ (\vec \varphi \cdot \nabla \overline u) qu \}
		& = (q \vec \varphi \cdot \nabla \overline u) u + (q \vec \varphi \cdot \nabla u) \overline u
		= q \vec \varphi \cdot \nabla (\overline u u) \nonumber \\
		& = \nabla \cdot (q \vec \varphi |u|^2) - [q(\nabla \cdot \vec \varphi) + \nabla q \cdot \vec \varphi] |u|^2. \label{eq:vpq-HR21}
	\end{align}
	Subtracting \eqref{eq:vpx-HR21} from \eqref{eq:vpt-HR21} and adding \eqref{eq:vpq-HR21}, we arrive at \eqref{eq:uu3-HR21}.
	The proof is done.
\end{proof}

In the following lemma we abbreviate $\mathcal E(u; \Gamma_S)$ as $\mathcal E(u)$ for simplicity,
and we show the energy possesses a similar triangle inequality property.

\begin{Lemma} \label{lem:tri-HR21}
	For any $u_1$, $u_2$ such that $\mathcal E(u_1)$, $\mathcal E(u_2)$ are well-defined, we have $\mathcal E(u_1 - u_2)$ is also well-defined and
	\begin{align}
	\sqrt{\mathcal E(u_1 + u_2)}
	& \leq \sqrt{\mathcal E(u_1)} + \sqrt{\mathcal E(u_2)}, \label{eq:tri1-HR21} \\
	|\mathcal E(u_1) - \mathcal E(u_2)|
	& \leq \mathcal E(u_1 - u_2) + 2 \sqrt{\mathcal E(u_1 - u_2)} \sqrt{\mathcal E( u_j)}, \quad j = 1,2. \label{eq:tri2-HR21}
	\end{align}
\end{Lemma}

\begin{proof}
	We show \eqref{eq:tri1-HR21} by direct computation.
	From \eqref{eq:en-HR21} we have
	\begin{align*}
	& \, \mathcal E(u_1 + u_2) \\
	= & \, \frac 1 2 \int_{\Omega} \big[ |\nabla \big( u_1 + u_2 \big)|_A^2 + (1 - |\nabla S(x)|_A^2) |\partial_t(u_1 + u_2)|^2 + q(x)|u_1 + u_2|^2 \big] \dif x \\
	= & \, \mathcal E(u_1) + \mathcal E(u_2) + 2\Re \int_{\Omega} \big[ \nabla \big( \overline{u}_1 \big) \cdot A \nabla \big( u_2 \big) + (1 - |\nabla S|_A^2) \overline{\partial_t u}_1 \partial_t u_2 + q(x) \overline{u}_1 u_2 \big] \dif x \\
	\leq & \, \mathcal E(u_1) + \mathcal E(u_2) + 2 \int_{\Omega} \big[ |\nabla \big( u_1 \big)|_A \cdot |\nabla \big( u_2 \big)|_A + (1 - |\nabla S|_A^2) |\partial_t u_1| |\partial_t u_2| + q |u_1 u_2| \big] \dif x \\
	\leq & \, \mathcal E(u_1) + \mathcal E(u_2) + 2 \int_{\Omega} \big[ |\nabla \big( u_1 \big)|_A^2 + (1 - |\nabla S|_A^2) |\partial_t u_1|^2 + q |u_1|^2 \big]^{1/2} \\
	& \hspace{8.5em} \cdot \big[ |\nabla \big( u_2 \big)|_A^2 + (1 - |\nabla S|_A^2) |\partial_t u_2|^2 + q |u_2|^2 \big]^{1/2} \dif x \\
	\leq & \, \mathcal E(u_1) + \mathcal E(u_2) + 2 \sqrt{\mathcal E(u_1)} \sqrt{\mathcal E(u_2)}
	= \big( \sqrt{\mathcal E(u_1)} + \sqrt{\mathcal E(u_2)} \big)^2.
	\end{align*}
	Note that we implicitly used conditions $|\nabla S(x)|_A \leq 1$ and $q(x) \geq 0$.
	\eqref{eq:tri1-HR21} is proved.

	Next we show \eqref{eq:tri2-HR21}.
	Similar to $|a + b| \leq |a| + |b|$, we also have $|a + b|_A \leq |a|_A + |b|_A$.
	This is because
	\begin{align*}
	|a + b|_A^2
	& = |a|_A^2 + |b|_A^2 + 2\Re \{\overline a \cdot Ab\}
	= |a|_A^2 + |b|_A^2 + 2\Re \{\overline {A^{1/2}a} \cdot A^{1/2}b\} \\
	& \leq |a|_A^2 + |b|_A^2 + 2|a|_A |b|_A
	= (|a|_A + |b|_A)^2.
	\end{align*}
	By this triangle inequality, it is straightforward to check $\mathcal E(u_1 - u_2) \leq 2[\mathcal E(u_1) + \mathcal E(u_2)]$, so $\mathcal E(u_1 - u_2)$ is well-defined.
	
	The triangle inequality obtained above also gives $|a|_A - |b|_A \leq |a-b|_A$.
	Hence
	\begin{align*}
	|\nabla (u_1)|_A^2 - |\nabla (u_2)|_A^2
	& = \big[ |\nabla (u_1)|_A - |\nabla (u_2)|_A \big]^2  + 2\big[ |\nabla (u_1)|_A - |\nabla (u_2)|_A \big]|\nabla (u_2)|_A \\
	& \leq |\nabla (u_1 - u_2)|_A^2 + 2|\nabla (u_1 - u_2)|_A |\nabla (u_2)|_A.
	\end{align*}
	Similarly,
	\begin{align*}
	|\partial_t u_1|^2 - |\partial_t u_2|^2
	& \leq |\partial_t (u_1 - u_2)|^2 + 2|\partial_t (u_1 - u_2)| |\partial_t u_2| \\
	q|u_1|^2 - q|u_2|^2
	& \leq q|u_1 - u_2|^2 + 2q|u_1 - u_2| |u_2|
	\end{align*}
	Summing up these three inequalities gives
	\begin{align*}
	& \, \mathcal E(u_1) - \mathcal E(u_2) \\
	\leq & \, \mathcal E(u_1 - u_2) + \int \Big[ |\nabla (u_1 - u_2)|_A |\nabla (u_2)|_A + (1 - |\nabla S(x)|_A^2) |\partial_t (u_1 - u_2)| |\partial_t u_2| \\
	& \quad \quad \quad \quad \quad \quad \quad + q|u_1 - u_2| |u_2| \Big] \dif x \\
	\leq & \, \mathcal E(u_1 - u_2) + \int \sqrt{|\nabla (u_1 - u_2)|_A^2 + (1 - |\nabla S(x)|_A^2)|\partial_t (u_1 - u_2)|^2 + q|u_1 - u_2|^2} \\
	& \quad \quad \quad \quad \quad \quad \quad \times \sqrt{|\nabla (u_2)|_A^2 + (1 - |\nabla S(x)|_A^2)|\partial_t u_2|^2 + q|u_2|^2} \dif x \\
	\leq & \, \mathcal E(u_1 - u_2) + 2 \sqrt{\mathcal E(u_1 - u_2)} \sqrt{\mathcal E( u_2)}.
	\end{align*}
	Similar arguments also imply
	\[
	\mathcal E(u_1) - \mathcal E(u_2)
	\leq \mathcal E(u_1 - u_2) + 2 \sqrt{\mathcal E(u_1 - u_2)} \sqrt{\mathcal E( u_1)}.
	\]
	Therefore,
	\[
	|\mathcal E(u_1) - \mathcal E(u_2)|
	\leq \mathcal E(u_1 - u_2) + 2 \sqrt{\mathcal E(u_1 - u_2)} \sqrt{\mathcal E( u_j)}, \quad j = 1,2.
	\]
	The proof is done.
\end{proof}

\subsection{The decomposition of the gradient} \label{subsec:nd-HR21}

This part is devoted to the analysis of the relation between $|\nu_\Sigma|_A^2 |\nabla u|_A^2$ and $|u_{\nu,A}|^2$ which will appear in \eqref{eq:igI11-HR21}.
Here $\nu_\Sigma$ stands for the outer unit normal vector to $\Sigma$,
and $u_{\nu,A}$ signifies the conormal derivative with respect to $A$, i.e.~$u_{\nu,A} := \nu_\Sigma \cdot A \nabla u$.
Readers can skip this part for the first time.

Recall that $\Sigma = \partial \Omega \times (0,T)$.
When $\Omega$ is the unit ball and $A(x)$ is the identity matrix, by straightforward computations it can be checked that on $\Sigma$ we have the following identity:
\begin{equation} \label{eq:nuX-HR21}
	|\nu_\Sigma|_A^2 |\nabla u|_A^2 = |u_{\nu,A}|^2 + \frac 1 2 \sum_{i \neq j} |X_{ij} u|^2, \quad \text{where} \quad X_{ij} = x_i \partial_j - x_j \partial_i, \ |x| = 1,
\end{equation}
see e.g.~\cite[(1.19)]{RakeshSalo1}.
Note that here $X_{ij}$ are vector fields tangential to the sphere of the unit ball.
Therefore, the square norm of the gradient $|\nabla u|^2$ is decomposed into the desired term $|u_{\nu,A}|^2$ along with other terms which are tangential gradients on $\Sigma$.
For general domain $\Omega$ and general matrix $A(x)$, we can show a similar decomposition result.

\begin{Lemma} \label{lem:nd-HR21}
	Assume $\Omega$ is a $C^1$-smooth domain.
	Then for a $C^1$-smooth function $u$, there exist two constants $C$ depending only on $c_1$, $c_2$ and the dimension $n$ such that on $\Sigma$ we have
	\begin{equation} \label{eq:nd-HR21}
		|\nu_\Sigma|_A^2 |\nabla u|_A^2
		\leq 1 \times |u_{\nu,A}|^2 + C |u_{\nu,A}| |\nabla_\Sigma u| +  C |\nabla_\Sigma u|^2,
	\end{equation}
	where $\nabla_\Sigma u$ represents the tangential gradient of $u$ on $\Sigma$.
\end{Lemma}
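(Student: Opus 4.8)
The plan is to argue pointwise on $\Sigma$ and to reduce the whole statement to the elementary orthogonal decomposition of the spatial gradient, keeping careful track of the Hermitian structure so that the coefficient in front of $|u_{\nu,A}|^2$ comes out to be exactly $1$. Fix a point of $\Sigma$, write $\nu$ for the outer unit (Euclidean) normal to $\partial\Omega$, so $|\nu| = 1$, and decompose $\nabla u = p\,\nu + w$, where $p := \nu\cdot\nabla u$ is the Euclidean normal derivative and $w := \nabla u - p\,\nu$ satisfies $\nu\cdot w = 0$. By definition $w$ is precisely the spatial tangential gradient of $u$ along $\partial\Omega$, which is a component of the intrinsic tangential gradient on $\Sigma$; hence $|w| \le |\nabla_\Sigma u|$.

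Next I would expand both sides in the Hermitian form $\langle a, b\rangle := \overline{a}^{\,T} A b$, so that $|\cdot|_A^2 = \langle\cdot,\cdot\rangle$. Since $\nu$ is real and $A$ is symmetric, $u_{\nu,A} = \nu\cdot A\nabla u = p\,|\nu|_A^2 + \langle\nu, w\rangle$, while $|\nabla u|_A^2 = |p|^2|\nu|_A^2 + 2\Re(\overline p\,\langle\nu,w\rangle) + |w|_A^2$. Multiplying the latter by $|\nu|_A^2$ and subtracting $|u_{\nu,A}|^2$, the terms $|p|^2|\nu|_A^4$ and $2|\nu|_A^2\Re(\overline p\,\langle\nu,w\rangle)$ cancel exactly, leaving the identity
\[
|\nu|_A^2\,|\nabla u|_A^2 - |u_{\nu,A}|^2 = |\nu|_A^2\,|w|_A^2 - |\langle\nu, w\rangle|^2 .
\]
As a sanity check against the model case \eqref{eq:nuX-HR21}: when $A = I$ on the unit sphere one has $\langle\nu,w\rangle = 0$, and the right-hand side collapses to $|w|^2$, matching $\tfrac12\sum_{i\neq j}|X_{ij}u|^2$.

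It then remains to bound the right-hand side. It is nonnegative by the Cauchy--Schwarz inequality for $\langle\cdot,\cdot\rangle$, and it is clearly at most $|\nu|_A^2\,|w|_A^2$. Invoking Assumption \ref{asp:A-HR21} together with $|\nu| = 1$ gives $|\nu|_A^2 \le c_2$ and $|w|_A^2 \le c_2|w|^2$, so the right-hand side is $\le c_2^2|w|^2 \le c_2^2|\nabla_\Sigma u|^2$. This already yields $|\nu|_A^2|\nabla u|_A^2 \le |u_{\nu,A}|^2 + c_2^2|\nabla_\Sigma u|^2$, from which \eqref{eq:nd-HR21} follows with $C = c_2^2$; the cross term $C|u_{\nu,A}||\nabla_\Sigma u|$ in the stated form may simply be retained (it is nonnegative), so the stated inequality is in fact slightly weaker than what the identity gives.

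I do not expect a genuine obstacle here; the delicate points are purely bookkeeping. The first is organizing the expansion through the Hermitian inner product so that the cross terms cancel and the coefficient of $|u_{\nu,A}|^2$ is exactly $1$ rather than a constant depending on $c_1,c_2$ — this sharp constant is what will be needed when the term is absorbed in \eqref{eq:igI11-HR21}. The second is verifying that the Euclidean-tangential part $w$ of $\nabla u$ is genuinely controlled by the intrinsic tangential gradient $\nabla_\Sigma u$ on $\Sigma$, which is immediate once one notes that $\nu_\Sigma$ is purely spatial and $w$ is the spatial component of $\nabla_\Sigma u$.
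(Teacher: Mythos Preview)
Your argument is correct and in fact yields a sharper statement than the paper proves: you obtain the pointwise identity
\[
|\nu_\Sigma|_A^2\,|\nabla u|_A^2 - |u_{\nu,A}|^2 \;=\; |\nu_\Sigma|_A^2\,|w|_A^2 - |\langle\nu_\Sigma, w\rangle|^2,
\]
which is nonnegative by Cauchy--Schwarz in the $A$-inner product and bounded above by $c_2^2|\nabla_\Sigma u|^2$; the cross term in \eqref{eq:nd-HR21} is then superfluous. The route, however, is genuinely different from the paper's. The paper introduces the matrix $E(x)=(Ae_1,e_2,\dots,e_{n+1})$ with $e_1=\nu_\Sigma$, verifies it is invertible, rewrites $|\nabla u|_A^2$ as a quadratic form in the coordinates $(u_{\nu,A},\,e_2\!\cdot\!\nabla u,\dots)$ via $M=E^{-1}AE^{-1,T}$, and then computes the $(1,1)$ entry of $M$ explicitly to show $|\nu_\Sigma|_A^2 M_{11}=1$. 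Your direct Euclidean decomposition $\nabla u = p\,\nu_\Sigma + w$ together with the expansion of $u_{\nu,A}$ in the Hermitian form avoids the auxiliary matrix entirely and makes the exact cancellation of the cross terms transparent; it also yields the explicit constant $C=c_2^2$ rather than an unspecified one. The paper's matrix framework, on the other hand, generalises more readily if one wanted a full diagonal description of $|\nabla u|_A^2$ in conormal/tangential coordinates, but for the inequality at hand your approach is more economical.
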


\begin{Remark}
	In Lemma \ref{lem:nd-HR21} we emphasize the coefficient of the term $|u_{\nu,A}|^2$ is exactly $1$.
\end{Remark}

\begin{proof}[Proof of Lemma \ref{lem:nd-HR21}]
	
	Denote $e_1 = \nu_\Sigma$ and fix an orthonormal basis $\{e_2, \cdots, e_{n+1}\}$ of a local chart of $\Sigma$.
	Then $\{e_1, e_2, \cdots, e_{n+1}\}$ is also an locally orthonormal basis of $\R^{n+1}$.
	The conclusion \eqref{eq:nd-HR21} is a local estimate so local arguments are enough for the proof.
	
	For simplicity we denote
	\begin{equation*}
		E(x) = (A e_1,\, e_2 ,\, \cdots ,\, e_{n+1}).
	\end{equation*}
	The ellipticity condition \eqref{eq:ellp-HR21} guarantees that $E(x)$ is always invertible;
	this is because $e_1 \cdot A e_1 \geq c_1 |e_1|^2 = c_1 > 0$, so $A e_1$ always has nonzero component in $e_1$ direction, and so $\{A e_1,\, e_2 ,\, \cdots ,\, e_{n+1}\}$ is always linearly independent.
	Hence, thanks to the existence of $E^{-1}(x)$, we can compute
	\begin{align*}
		& \, |\nabla u|_A^2
		= (\nabla \overline u)^T A \nabla u
		= (\nabla \overline u)^T E(x) [A^{-1/2} E(x)]^{-1} [E^T(x) A^{-1/2}]^{-1} E^T(x) \nabla u \\
		= & \, (\nabla \overline u)^T E(x) [E^{-1}(x) A E^{-1,T}(x)] E^T(x) \nabla u \\
		= & \, (\overline u_{\nu,A},\, e_2 \cdot \nabla \overline u,\, \cdots,\, e_{n+1} \cdot \nabla \overline u) [E^{-1}(x) A E^{-1, T}(x)] (u_{\nu,A},\, e_2 \cdot \nabla u,\, \cdots,\, e_{n+1} \cdot \nabla u)^T.
	\end{align*}
	Here $E^{-1,T}(x)$ signifies the transpose of the inverse of the matrix $E(x)$.
	Let us denote
	\[
	M(x) = E^{-1}(x) A(x) E^{-1,T}(x)
	\]
	and use $M_{ij}$ to signify the elements of $M$.
	Note that $M$ is symmetric, then we have
	\begin{equation} \label{eq:nuAM-HR21}
		|\nabla u|_A^2
		= M_{11} |u_{\nu,A}|^2 + 2\Re \{\overline u_{\nu,A} \sum_{j = 2}^n M_{1j} (e_j \cdot \nabla u)\} + \sum_{k,l = 2}^n M_{kl} (e_k \cdot \nabla \overline u) (e_l \cdot \nabla u).
	\end{equation}

	We claim that $e_l \cdot \nabla u$ is a component of the tangential gradient of $u$ on $\Sigma$, which is similar to the $X_{ij} u$ term in \eqref{eq:nuX-HR21}.
	To see that, we choose a $C^1$-smooth curve $\gamma \colon (-1,1) \to \Sigma$ on $\Sigma$ satisfying $\gamma(0) = x$ and $\dot \gamma(0) = e_l$,
	then $e_l \cdot \nabla u(x)$ can be represented as $\frac{\df}{\df t} |_{t = 0} \big( u(\gamma(t)) \big)$.
	This justifies our claim.
	Hence, we have
	\[
	|e_l \cdot \nabla u| \leq |\nabla_\Sigma u|, \quad l = 2, \cdots, n+1,
	\]
	and so we can continue \eqref{eq:nuAM-HR21} as
	\begin{align}
		|\nu_\Sigma|_A^2 |\nabla u|_A^2
		& = |\nu_\Sigma|_A^2 (M_{11} |u_{\nu,A}|^2 + C |u_{\nu,A}| |\nabla_\Sigma u| + C |\nabla_\Sigma u|^2) \nonumber \\
		& \leq (|\nu_\Sigma|_A^2 M_{11}) |u_{\nu,A}|^2 + C |u_{\nu,A}| |\nabla_\Sigma u| + C |\nabla_\Sigma u|^2, \label{eq:nMC-HR21}
	\end{align}
	for some constant $C$.
	From the definition of the matrix $M$, it can be checked that $C$ depends only on $c_1$, $c_2$ and $n$.
	
	It is left to show
	\begin{equation} \label{eq:M1n-HR21}
		|\nu_\Sigma|_A^2 M_{11} = 1.
	\end{equation}
	To see this, let us represent the inverse matrix $E^{-1}(x)$ as
	\[
	E^{-1}(x) = (f_1(x), f_2(x), \cdots, f_{n+1}(x))^T,
	\]
	then the matrix identity $E^{-1}(x) E(x) = I$ gives
	\begin{align}
		& f_1(x) \cdot e_j(x) = 0, \ j = 2, \cdots, n+1. \label{eq:fe2-HR21} \\
		& f_1(x) \cdot A(x) e_1(x) = 1. \label{eq:fe1-HR21}
	\end{align}
	Because $\{ e_1, e_2, \cdots, e_{n+1} \}$ is an orthonormal basis in a chart, from \eqref{eq:fe2-HR21} we see $f_1(x)$ is parallel to $e_1(x)$, i.e.~$f_1(x) = \lambda(x) e_1(x)$ for some function $\lambda(x)$.
	Substitute this into \eqref{eq:fe1-HR21}, we see $\lambda(x) = |e_1(x)|_A^{-2} = |\nu_\Sigma|_A^{-2}$.
	Therefore, we have
	\begin{equation*}
		M_{11} |\nu_\Sigma|_A^2
		= f_1^T(x) A(x) f_1(x) |\nu_\Sigma|_A^2
		= \lambda(x) e_1^T(x) A(x) \lambda(x) e_1(x) |\nu_\Sigma|_A^2
		= \lambda^2(x) |\nu_\Sigma|_A^4
		= 1,
	\end{equation*}
	which is \eqref{eq:M1n-HR21}.
	Combining \eqref{eq:nMC-HR21} and \eqref{eq:M1n-HR21}, we can complete the proof.
\end{proof}

\section{The \texorpdfstring{$C^2$}{C2} smooth case} \label{sec:est-HR21}

Recall $\agl[T] := (1+|T|^2)^{1/2}$ and $\mathcal E(u,0) := \mathcal E(u; S(x) = 0)$.
In this section we aim to prove the following result.

\begin{Proposition} \label{prop:Bddd-HR21}
	Given Assumption \ref{asp:nS-HR21}.
	Assume $u \in C^2(\overline Q)$ solves \eqref{eq:1-HR21}-\eqref{eq:1con-HR21}.
	Then we have
	\begin{align*}
	|\mathcal E(u; \Gamma_S) - \mathcal E(u,0)|
	& \leq C \agl[T_2]^{1/2} \big[ \sqrt{\mathcal E(u,0)} + \agl[T_2]^{1/2} (\nrm[H^1(\Sigma_0)]{f} + \nrm[L^2(Q_0)]{G}) \big] \nonumber \\
	& \quad \times (\nrm[H^1(\Sigma_0)]{f} + \nrm[L^2(Q_0)]{G}),
	\end{align*}
	for some constant $C$ depending only on $A$ and the dimension $n$.
	Especially, when $f = 0$ and $G = 0$ in \eqref{eq:1con-HR21}, we have
	\begin{equation} \label{eq:Bdd2-HR21}
	\mathcal E(u; \Gamma_S)
	= \frac 1 2(\nrm[L^2(\Omega)]{\nabla u_0}^2 + \nrm[L^2(\Omega)]{u_1}^2 + \nrm[L^2(\Omega)]{\sqrt q u_0}^2).
	\end{equation}
\end{Proposition}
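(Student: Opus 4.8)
The plan is to run the classical multiplier (energy) method directly on the region $Q_0$ lying underneath $\Gamma_S$, using the two divergence identities of Lemma \ref{lem:uu-HR21}. First I would integrate identity \eqref{eq:uu-HR21}, with the time-derivative multiplier $\overline{u}_t$, over $Q_0$ and apply the spacetime divergence theorem. The boundary $\partial Q_0$ splits into the top $\Gamma_S$, the bottom $\Gamma_0 = \Omega \times \{t=0\}$, and the lateral part $\Sigma_0$. On $\Gamma_S$ the outward normal is $\nu$ from \eqref{eq:nu-HR21} and the surface element satisfies $\nu \dif{S} = (-\nabla S, 1)\dif{x}$; a chain-rule expansion of $|\nabla(u(x,S(x)))|_A^2 = |\nabla u + u_t \nabla S|_A^2$ shows that the flux of the energy current $[-2\overline{u}_t A\nabla u,\, |u_t|^2 + |\nabla u|_A^2]$ through $\Gamma_S$ is exactly $\mathcal E(u;\Gamma_S)$. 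On $\Gamma_0$ the flux is $-\int_\Omega (|u_1|^2 + |\nabla u_0|_A^2)\dif{x}$, and on $\Sigma_0$ (normal $(\nu_\Sigma,0)$) it equals $-2\Re(\overline{u}_t u_{\nu,A})$. Since the left-hand side equals $\int_{Q_0} 2\Re(\overline{u}_t G)$, I obtain the master identity
\[
\mathcal E(u;\Gamma_S) - \int_\Omega (|u_1|^2 + |\nabla u_0|_A^2)\dif{x} = \int_{Q_0} 2\Re(\overline{u}_t G)\diff{x}{t} + \int_{\Sigma_0} 2\Re(\overline{u}_t\, u_{\nu,A})\dif{S}.
\]

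This identity already settles the conservation statement \eqref{eq:Bdd2-HR21}: when $G=0$ and $f=0$, the bulk term vanishes and, since $u\equiv 0$ hence $u_t\equiv 0$ on $\Sigma_0$, the lateral term vanishes as well. I would also stress here that the non-timelike hypothesis $|\nabla S|_A\le 1$ (Assumption \ref{asp:nS-HR21}) is what makes the coefficient $1-|\nabla S|_A^2$ nonnegative, so that $\mathcal E(u;\Gamma_S)$ is a genuine (nonnegative) energy controlling $|\nabla(u|_{\Gamma_S})|_A^2$, and is what keeps the $\Gamma_S$-flux under control in the estimates below.

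For the quantitative bound it remains to estimate the two right-hand terms. The bulk term is handled by Cauchy–Schwarz, $\nrm[L^2(Q_0)]{u_t}\nrm[L^2(Q_0)]{G}$; the lateral term, using $u_t = f_t$ on $\Sigma$, by $\nrm[L^2(\Sigma_0)]{f_t}\nrm[L^2(\Sigma_0)]{u_{\nu,A}}$. The factor $\nrm[L^2(\Sigma_0)]{u_{\nu,A}}$ is the hidden-regularity term and is the crux of the argument. To estimate it I would apply the divergence theorem to the second identity \eqref{eq:uu3-HR21}, choosing the real vector field $\vec\varphi$ to be a smooth extension into $\overline\Omega$ of the outer unit normal $\nu_\Sigma$ on $\partial\Omega$. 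On $\Sigma_0$ the resulting boundary flux produces the term $|\nu_\Sigma|_A^2|\nabla u|_A^2$, which Lemma \ref{lem:nd-HR21} converts into $|u_{\nu,A}|^2$ (with coefficient exactly $1$) plus tangential contributions in $|\nabla_\Sigma u|$; since $u=f$ on $\Sigma$ these tangential derivatives are tangential derivatives of $f$, hence controlled by $\nrm[H^1(\Sigma_0)]{f}$. The fluxes of this second current through $\Gamma_S$ and $\Gamma_0$, together with the interior terms it generates (the $\nabla\cdot\vec\varphi$, $\vec\varphi\cdot\nabla A$ and $\partial_j\varphi_k$ terms of \eqref{eq:uu3-HR21}), are all dominated by the energy and by $\nrm[L^2(\Omega)]{\nabla u_0},\nrm[L^2(\Omega)]{u_1}$. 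Solving the resulting inequality for $\nrm[L^2(\Sigma_0)]{u_{\nu,A}}^2$ yields the conormal estimate that is also Theorem \ref{thm:2-HR21}.

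The main obstacle is the simultaneous, and a priori circular, control of $\nrm[L^2(Q_0)]{u_t}$, the surface energies $\mathcal E$, and $\nrm[L^2(\Sigma_0)]{u_{\nu,A}}$, each of which feeds interior/energy quantities into the others. I would break the loop by running the $\overline{u}_t$-multiplier over horizontal slabs $\Omega\times(0,\tau)$ to produce the standard energy function $\widehat E(\tau) := \int_\Omega (|u_t|^2+|\nabla u|_A^2)(x,\tau)\dif{x}$; after reducing to homogeneous Dirichlet data by lifting $f$ (so that the boundary pairing drops out), $\widehat E$ obeys a differential inequality for $\widehat E^{1/2}$ with right-hand side linear in the data, whose integration gives $\sup_{\tau\le T_2}\widehat E(\tau)\lesssim \nrm[L^2(\Omega)]{\nabla u_0}^2+\nrm[L^2(\Omega)]{u_1}^2+\agl[T_2](\nrm[H^1(\Sigma_0)]{f}^2+\nrm[L^2(Q_0)]{G}^2)$ \emph{polynomially} (not exponentially) in $\agl[T_2]$. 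This bounds $\nrm[L^2(Q_0)]{u_t}^2\le T_2\sup_\tau\widehat E(\tau)$ and is the origin of the $\agl[T_2]$ weights in the statement. Feeding these bounds back into the master identity and the conormal estimate, and once more using Young's inequality to absorb the quadratic cross terms, yields the stated product-form bound on $|\mathcal E(u;\Gamma_S)-\int_\Omega(|\nabla u_0|_A^2+|u_1|^2)\dif{x}|$. Throughout, the piecewise-$C^1$ nature of $S$ is handled by applying the divergence theorem on each smooth piece and verifying that the interface contributions cancel.
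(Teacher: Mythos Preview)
Your overall plan—the master energy identity on $Q_0$, the conservation statement, and the use of the second multiplier identity \eqref{eq:uu3-HR21} to extract the hidden regularity $\nrm[L^2(\Sigma_0)]{u_{\nu,A}}$—matches the paper's approach closely. Two points, however, deserve correction.

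First, a minor one: with $\vec\varphi|_{\partial\Omega} = \nu_\Sigma$ the $\Sigma_0$-flux of \eqref{eq:uu3-HR21} is $(|\nabla u|_A^2 - |u_t|^2) - 2\Re(\partial_\nu \bar u\,\cdot u_{\nu,A})$, \emph{not} the quantity $|\nu_\Sigma|_A^2|\nabla u|_A^2$ that Lemma \ref{lem:nd-HR21} is designed to treat. The paper instead takes $\vec\varphi|_{\partial\Omega} = A\nu_\Sigma$, so that $\nu_\Sigma\cdot\vec\varphi = |\nu_\Sigma|_A^2$ and $\vec\varphi\cdot\nabla\bar u = \bar u_{\nu,A}$; the $\Sigma_0$-integrand then becomes exactly $|\nu_\Sigma|_A^2(|\nabla u|_A^2 - |u_t|^2) - 2|u_{\nu,A}|^2$, and Lemma \ref{lem:nd-HR21} immediately yields $\le -\tfrac12|u_{\nu,A}|^2 + C|\nabla_\Sigma f|^2$. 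Your choice can be salvaged, but not as you wrote it.

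Second, and more seriously, the lifting step you propose to break the circularity does not deliver the claimed bound. If you set $w = u - F$ with $F|_\Sigma = f$, the new source is $\tilde G = G - F_{tt} + \nabla\cdot(A\nabla F)$, and the differential inequality for $\widehat E_w^{1/2}$ needs $\|\tilde G(\cdot,\tau)\|_{L^2(\Omega)}$, i.e.\ \emph{second} derivatives of $F$. No lift compatible with $f\in H^1(\Sigma)$ controls these by $\nrm[H^1(\Sigma_0)]{f}$; you would end up with a right-hand side depending on $\nrm[H^2]{f}$ (or $\nrm[C^2]{f}$), which is not the statement of the proposition. The paper avoids lifting entirely. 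It runs the $\bar u_t$-identity on the \emph{curved} slabs $Q_\tau$ lying between $H_\tau = Q_0\cap\{t=\tau\}$ and $\Gamma_{S,\tau}$, obtaining an inequality for $e(\tau):=\int_{H_\tau}(|u_t|^2+|\nabla u|_A^2)$ in which the partial energy $\mathcal E_\tau(u)$ and the lateral pairing $\int_{\Sigma_\tau}|f_t\,u_{\nu,A}|$ appear as forcing; Gr\"onwall then gives $e(\tau)\le e^{(T_2-\tau)/K}(K\nrm[L^2(Q_\tau)]{G}^2+\mathcal E_\tau(u)+2\int_{\Sigma_\tau}|f_t\,u_{\nu,A}|)$ (Lemma \ref{lem:Bdd-HR21}). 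This is coupled with the conormal estimate (Lemma \ref{lem:tdlv-HR21}) through Young-parameters $\epsilon$ chosen so that each inequality absorbs the other's unknown term, and the Gr\"onwall parameter is taken $K\sim T_2-\tau$, which makes $e^{(T_2-\tau)/K}$ a universal constant and produces the polynomial $\agl[T_2]$-weights. The loop is thereby closed within the nonhomogeneous problem, without ever invoking second derivatives of $f$.
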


This section and a major portion of Section \ref{sec:app-HR21} involve the same hypersurface $\Gamma_S$, so we abbreviate $\mathcal E(u; \Gamma_S)$ as $\mathcal E(u)$ for short in these parts.
Recall the partial hypersurface $\Gamma_{S,\tau}$ given in Fig.~\ref{fig:geo-HR21}.
For technical reasons we also introduce the following functional $\mathcal E_\tau(u)$ which we shall call it \emph{partial energy}
and which takes a real number $\tau$ as its parameter,
{\small \begin{equation} \label{eq:ent-HR21}
	\mathcal E_\tau(u)
	:= \frac 1 2 \int_{\pi(\Gamma_{S, \tau})} \big[ |\nabla \big( u(x, S(x)) \big)|_A^2 + (1 - |\nabla S(x)|_A^2) |u_t(x, S(x))|^2 + q(x) |u(x,\tau)|^2 \big] \dif x,
\end{equation}}
where $\pi \colon (x,t) \mapsto x$ is a projection map.
Readers can compare \eqref{eq:ent-HR21} with \eqref{eq:en-HR21}, and shall distinguish the notation $\mathcal E_\tau(u)$ with $\mathcal E(u; \Gamma_\tau)$ defined on different hypersurfaces $\Gamma_\tau$.
When $\tau \leq T_1$, we have $\pi(\Gamma_{S, \tau}) = \Omega$, so $\mathcal E_\tau(u)$ will be a constant with respect to $\tau$ when $\tau \leq T_1$.
Under Assumption \ref{asp:nS-HR21} and $q(x) \geq 0$,
we always have $\mathcal E_\tau(u) \leq \mathcal E(u)$;
and when $\tau \leq T_1$ we have $\mathcal E_\tau(u) = \mathcal E(u)$.

The relationship between $\nrm[H^1(\Gamma_S)]{u}$ and $\mathcal E(u)$ are given below,
\begin{align}
	\nrm[H^1(\Gamma_S)]{u}^2
	& = \int_{\Omega} |\nabla \big( u(x, S(x)) \big)|^2 \frac {\dif x} {(1 + |\nabla S(x)|^2)^{1/2}}
	\leq c_1^{-1} \int_{\Omega} |\nabla \big( u(x, S(x)) \big)|_A^2 \dif x \nonumber \\
	& = c_1^{-1} \int_{\Omega} (|\nabla u|_A^2 + |\nabla S|_A^2 |u_t|^2 + 2 \Re \{\nabla S \cdot \overline u_t A \nabla u\}) \dif x \nonumber \\
	& \leq c_1^{-1} \int_{\Omega} (|\nabla u|_A^2 + c_2 |u_t|^2 + 2 \Re \{\nabla S \cdot \overline u_t A \nabla u\}) \dif x \nonumber \\
	& \leq 2c_1^{-1} \max\{1, c_2\} \mathcal E(u), \label{eq:H1H-HR21}
\end{align}
where we used Assumption \ref{asp:nS-HR21} and \eqref{eq:ellp-HR21}
For readers convenience we also record the following identity,
\[
|\nabla \big( u \big)|_A^2 + (1 - |\nabla S|_A^2) |u_t|^2 + q|u|^2
= |\nabla u|_A^2 + |u_t|^2 + q|u|^2 + 2 \Re \{\overline u_t \nabla S \cdot A \nabla u\}.
\]

The arguments of proving Proposition \ref{prop:Bddd-HR21} are divided into several steps.

\begin{Lemma} \label{lem:Bdd-HR21}
	Under the same condition as in Proposition \ref{prop:Bddd-HR21}, we have
	\begin{align*}
	\mathcal E_\tau(u)
	& \leq \int_{H_\tau} (|\nabla u(x,\tau)|_A^2 + |u_t(x,\tau)|^2 + q(x)|u(x,\tau)|^2) \dif x \nonumber \\
	& \quad + 3 \nrm[L^2(\Sigma_\tau)]{f_t} \nrm[L^2(\Sigma_\tau)]{u_{\nu,A}} + 4 (T_2 - \tau) \nrm[L^2(Q_\tau)]{G}^2.
	\end{align*}
\end{Lemma}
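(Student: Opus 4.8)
The plan is to prove the identity by the multiplier method, using \eqref{eq:uu-HR21} with the multiplier $\overline u_t$, and then to close the source term by a bootstrap in the level parameter. Since $u$ solves \eqref{eq:1-HR21}, the left-hand side of \eqref{eq:uu-HR21} equals $2\Re(\overline u_t G)$, so throughout $Q_\tau$ we have $\Re \divr_{x,t}[-2\overline u_t A\nabla u,\, |u_t|^2 + |\nabla u|_A^2] = 2\Re(\overline u_t G)$. Integrating over $Q_\tau$ and applying the divergence theorem (legitimate since $u \in C^2(\overline Q)$ and $Q_\tau$ has piecewise $C^1$ boundary) converts the left side into a sum of three boundary fluxes, over the slanted top $\Gamma_{S,\tau}$, the horizontal bottom $H_\tau$, and the lateral part $\Sigma_\tau$.

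Next I would evaluate each flux. On $\Gamma_{S,\tau}$ the outward normal is $\nu$ from \eqref{eq:nu-HR21} and the surface measure is $(1+|\nabla S|^2)^{1/2}\dif x$; the two factors of $(1+|\nabla S|^2)^{1/2}$ cancel, and the recorded identity $|\nabla u|_A^2 + |u_t|^2 + 2\Re\{\overline u_t \nabla S\cdot A\nabla u\} = |\nabla(u)|_A^2 + (1-|\nabla S|_A^2)|u_t|^2$ shows the top flux equals exactly $\mathcal E_\tau(u)$. On $H_\tau$ the outward normal is $(0,-1)$, giving the flux $-\int_{H_\tau}(|\nabla u|_A^2 + |u_t|^2)\dif x$. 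On $\Sigma_\tau$ the outward normal is horizontal, so the flux is $-2\int_{\Sigma_\tau}\Re(\overline u_t u_{\nu,A})$, and since $u = f$ on $\Sigma$ I may replace $u_t$ by $f_t$. Rearranging yields the exact identity $\mathcal E_\tau(u) = \int_{H_\tau}(|\nabla u|_A^2 + |u_t|^2)\dif x + 2\int_{\Sigma_\tau}\Re(\overline f_t u_{\nu,A}) + \int_{Q_\tau}2\Re(\overline u_t G)$, which already exhibits the three terms of the statement.

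It remains to estimate the last two terms. The boundary term is bounded by $2\nrm[L^2(\Sigma_\tau)]{f_t}\nrm[L^2(\Sigma_\tau)]{u_{\nu,A}}$ by Cauchy--Schwarz, which will contribute the coefficient $6$ after the absorption below. The source term is the crux: $2\int_{Q_\tau}|u_t||G|$ contains $u_t$, which is precisely the quantity the energy controls, so a single Cauchy--Schwarz cannot close the estimate. To handle it I would slice $Q_\tau$ horizontally, writing $\int_{Q_\tau}|u_t|^2 = \int_\tau^{T_2}\int_{\pi(H_\sigma)}|u_t(x,\sigma)|^2\dif x\,\dif{\sigma}$, and introduce $P := \sup_{\sigma\in[\tau,T_2]}\psi(\sigma)$ with $\psi(\sigma) := \int_{\pi(H_\sigma)}(|\nabla u(x,\sigma)|_A^2 + |u_t(x,\sigma)|^2)\dif x$. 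Applying the identity of the previous paragraph at each level $\sigma \ge \tau$ gives $\psi(\sigma) = \mathcal E_\sigma(u) - 2\int_{\Sigma_\sigma}\Re(\overline f_t u_{\nu,A}) - \int_{Q_\sigma}2\Re(\overline u_t G)$; bounding the two subtracted terms and using the monotonicity $\mathcal E_\sigma(u) \le \mathcal E_\tau(u)$ (valid because the integrand of $\mathcal E_\sigma$ is nonnegative under Assumption \ref{asp:nS-HR21} and the base $\pi(\Gamma_{S,\sigma})$ shrinks as $\sigma$ grows) produces the self-referential inequality $P \le \mathcal E_\tau(u) + 2b + 2(T_2-\tau)^{1/2}P^{1/2}\nrm[L^2(Q_\tau)]{G}$, where $b := \nrm[L^2(\Sigma_\tau)]{f_t}\nrm[L^2(\Sigma_\tau)]{u_{\nu,A}}$. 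A Young's inequality absorbs the $P^{1/2}$ term and bounds $P$ in terms of $\mathcal E_\tau(u)$, $b$, and $(T_2-\tau)\nrm[L^2(Q_\tau)]{G}^2$. Feeding this back into the identity for $\mathcal E_\tau(u)$ and applying one further Young's inequality absorbs a factor $\tfrac12\mathcal E_\tau(u)$ to the left, leaving the stated estimate with the coefficients $2$, $6$, and $8$ after careful bookkeeping of the Young constants.

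The main obstacle is exactly this closing of the source-term estimate: since $u_t$ appears both in the energy being bounded and in the source integral, the terms cannot be estimated in isolation, and it is the bootstrap on $P$ (equivalently a Gr\"onwall argument in the level parameter), combined with the monotonicity of the partial energy, that makes the argument self-consistent. The non-timelike hypothesis enters precisely here: it is what guarantees that the partial energies $\mathcal E_\sigma(u)$ are nonnegative and nonincreasing in $\sigma$, which is what licenses the comparison $\mathcal E_\sigma(u)\le\mathcal E_\tau(u)$ and hence the whole bootstrap.
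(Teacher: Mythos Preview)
Your derivation of the energy identity via the divergence theorem and the evaluation of the three boundary fluxes is exactly what the paper does; in the paper's notation this is equation \eqref{eq:0g0-HR21}. The difference lies in how the self-referential source term is closed. The paper bounds $e(\sigma)$ by the right-hand side of the identity with a free Young parameter $K$, applies Gr\"onwall's inequality in $\sigma$ to obtain \eqref{eq:0g12-HR21}, substitutes that back into the identity at level $\tau$, and finally optimizes over $K$ (the minimum of $2Ke^{(T_2-\tau)/K}/(T_2-\tau) - \tfrac32 K/(T_2-\tau)$ lies between $3.5$ and $4$, which is what yields the constant $8$). Your route instead takes the supremum $P$ of the horizontal energies over all levels and closes by a quadratic (Young) inequality in $P^{1/2}$; this avoids Gr\"onwall entirely and is a bit more elementary.

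Both arguments use the monotonicity $\mathcal E_\sigma(u)\le\mathcal E_\tau(u)$ coming from the non-timelike hypothesis in the same essential way. One caveat: with the straightforward bookkeeping you outline (Young with parameter $\tfrac12$ to bound $P$, then Young with parameter $\tfrac14$ to absorb $\tfrac12\mathcal E_\tau(u)$), the constants come out as $2$, $6$, $10$ rather than $2$, $6$, $8$; a single-parameter optimization along your lines gives $2$, $6$, $9$ at best. Reaching exactly $8$ seems to require the paper's extra degree of freedom (the Gr\"onwall parameter $K$) and the numerical optimization at the end. This has no bearing on the rest of the paper, where only the structure of the bound matters, but you should not claim the precise $8$ without that additional step.
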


\begin{proof}
	Equation \eqref{eq:1-HR21} gives $\partial_t^2 u - \nabla \cdot (A(x) \nabla u) + q u = G$ in $Q_\tau$.
	Hence, integrating the identity \eqref{eq:uu-HR21} in $Q_\tau$, we can have
	\begin{align}
	& \Re \int_{Q_\tau} 2 \overline{u}_t G
	= \Re \int_{\partial Q_\tau} \nu_{Q_{\tau}} \cdot (-2 \overline{u}_t A \nabla u,|u_t|^2 + |\nabla u|_A^2 + q|u|^2) \nonumber \\
	= & \Re \int_{H_\tau} (\vec 0, -1) \cdot (-2 \overline{u}_t A \nabla u,|u_t|^2 + |\nabla u|_A^2 + q|u|^2) \nonumber \\
	& + \Re \int_{\Sigma_\tau} \nu_\Sigma \cdot (-2 \overline{u}_t A \nabla u,|u_t|^2 + |\nabla u|_A^2 + q|u|^2) \nonumber \\
	& + \Re \int_{\Gamma_{S, \tau}} \nu \cdot (-2 \overline{u}_t A \nabla u, |u_t|^2 + |\nabla u|_A^2 + q|u|^2) \qquad (\nu \text{~is defined in~} \eqref{eq:nu-HR21}) \nonumber \\
	= & -\int_{H_\tau} (|u_t|^2 + |\nabla u|_A^2 + q|u|^2) \dif \sigma - 2 \Re \int_{\Sigma_\tau}  \overline{u}_t u_{\nu,A} \dif \sigma \qquad \big( u_{\nu,A} := \nu_\Sigma \cdot A \nabla u \big) \nonumber \\
	& + \Re \int_{\Gamma_{S, \tau}} (2 \nabla S \cdot \overline u_t A \nabla u + |u_t|^2 + |\nabla u|_A^2 + q|u|^2) \frac {\dif \sigma} {\sqrt{1 + |\nabla S|^2}} \nonumber \\
	= & -\int_{H_\tau} (|u_t|^2 + |\nabla u|_A^2 + q|u|^2) \dif \sigma - 2 \Re \int_{\Sigma_\tau}  \overline{u}_t u_{\nu,A} \dif \sigma \nonumber \\
	& + \int_{\pi(\Gamma_{S, \tau})} (2 \Re \{\nabla S \cdot \overline u_t A \nabla u\} + |u_t|^2 + |\nabla u|_A^2 + q|u|^2) \dif x \nonumber \\
	= & -\int_{H_\tau} (|u_t|^2 + |\nabla u|_A^2 + q|u|^2) \dif \sigma - 2 \Re \int_{\Sigma_\tau} \overline{u}_t u_{\nu,A} \dif \sigma + \mathcal E_\tau(u), \label{eq:0g0-HR21}
	\end{align}
	where $\nu_{Q_{\tau}}$ signifies the outer unit normal of $Q_{\tau}$,
	$\nu_\Sigma$ is the outer unit normal vector to $\Sigma$, and
	$u_{\nu,A}$ signifies the conormal derivative with respect to $A$, i.e.~$u_{\nu,A} := \nu_\Sigma \cdot A \nabla u$.

	We denote
	\begin{equation} \label{eq:0gh-HR21}
		e(\tau) := \frac 1 2 \int_{H_\tau} (|\nabla u|_A^2 + |u_t|^2 + q|u|^2) \dif x.
	\end{equation}
	Substituting \eqref{eq:0gh-HR21} into \eqref{eq:0g0-HR21}, we have
	\begin{align}
		e(\tau)
		& = -\Re \int_{Q_\tau} \overline{u}_t G + \mathcal E_\tau(u) - \Re \int_{\Sigma_\tau} \overline{u}_t u_{\nu,A} \dif \sigma \nonumber \\
		& \leq  \int_{Q_\tau} \frac 1 {2K} |u_t|^2 + \int_{Q_\tau} \frac K 2 |G|^2 + \mathcal E_\tau(u) - \Re \int_{\Sigma_\tau} \overline{u}_t u_{\nu,A} \dif \sigma \nonumber \\
		& \leq \frac 1 K \int_\tau^{T_2} e(s) \dif s + \frac K 2 \nrm[L^2(Q_\tau)]{G}^2 + \mathcal E_\tau(u) + \int_{\Sigma_\tau}  |u_t u_{\nu,A}| \dif \sigma, \nonumber
	\end{align}
	where $K$ can be any positive number.
	Recall that under Assumption \ref{asp:nS-HR21} we have $|\nabla S|_A \leq 1$, so $\mathcal E_\tau(u)$ is always non-increasing with respect to $\tau$.
	Hence, by Gr\"onwall's inequality we can obtain
	\begin{equation} \label{eq:0g12-HR21}
	e(\tau)
	\leq e^{(T_2 - \tau)/K} (\frac K 2 \nrm[L^2(Q_\tau)]{G}^2 + \mathcal E_\tau(u) + \int_{\Sigma_\tau}  |u_t u_{\nu,A}| \dif \sigma), \quad \forall \tau \in [0, T_2].
	\end{equation}
	
	On the other hand, when $\tau \leq T_1$, from \eqref{eq:0g0-HR21} we also obtain
	\begin{align*}
	& \, \mathcal E_\tau(u)
	= e(\tau) + \Re \int_{\Sigma_\tau}  \overline{u}_t u_{\nu,A} \dif \sigma + \Re \int_{Q_\tau} \overline{u}_t G \nonumber \\
	\leq & \, e(\tau) + \int_{\Sigma_\tau}  |u_t u_{\nu,A}| \dif \sigma + \frac 1 {2\epsilon} \int_{Q_\tau} |G|^2 + \epsilon \int_\tau^{T_2} e(s) \dif s \nonumber \\
	\leq & \, e(\tau) + \int_{\Sigma_\tau}  |u_t u_{\nu,A}| \dif \sigma + \frac 1 {2\epsilon} \nrm[L^2(Q_\tau)]{G}^2 \nonumber \\
	& \, + \epsilon \int_\tau^{T_2} e^{(T_2 - s)/K} \big[ \frac K 2 \nrm[L^2(Q_s)]{G}^2 + \mathcal E_s(u) + \int_{\Sigma_s} |u_t u_{\nu,A}| \dif \sigma \big] \dif s \qquad (\text{by~} \eqref{eq:0g12-HR21}) \nonumber \\
	\leq & \, e(\tau) + \int_{\Sigma_\tau}  |u_t u_{\nu,A}| \dif \sigma + \frac 1 {2\epsilon} \nrm[L^2(Q_\tau)]{G}^2 \nonumber \\
	& \, + \epsilon K (e^{(T_2 - \tau)/K} - 1) \big[ \frac K 2 \nrm[L^2(Q_\tau)]{G}^2 + \mathcal E_\tau(u) + \int_{\Sigma_\tau} |u_t u_{\nu,A}| \dif \sigma \big] \nonumber \\
	\leq & \, e(\tau) + [1 + \epsilon K (e^{(T_2 - \tau)/K} - 1)] \int_{\Sigma_\tau}  |u_t u_{\nu,A}| \dif \sigma \nonumber \\
	& \, + \frac 1 2 [\frac 1 \epsilon + \epsilon K^2 (e^{(T_2 - \tau)/K} - 1)] \nrm[L^2(Q_\tau)]{G}^2 + \epsilon K (e^{(T_2 - \tau)/K} - 1) \mathcal E_\tau(u).
	\end{align*}
	By setting $\epsilon = [2K(e^{(T_2 - \tau)/K} - 1)]^{-1}$ and absorbing $\mathcal E_\tau(u)$ on the right-hand side (RHS) by the left-hand side (LHS), we obtain
	\begin{align}
	\frac 1 2 \mathcal E_\tau(u)
	& \leq e(\tau) + \frac 3 2 \int_{\Sigma_\tau} |u_t u_{\nu,A}| \dif \sigma + \frac 1 2 [2K (e^{(T_2 - \tau)/K} - 1) + \frac K 2] \nrm[L^2(Q_\tau)]{G}^2 \nonumber \\
	& \leq e(\tau) + \frac 3 2 \int_{\Sigma_\tau} |u_t u_{\nu,A}| \dif \sigma + \frac {T_2 - \tau} 2 [2\frac {K e^{(T_2 - \tau)/K}} {T_2 - \tau} - \frac 3 2 \frac K {T_2 - \tau}] \nrm[L^2(Q_\tau)]{G}^2. \label{eq:0g031-HR21}
	\end{align}
	When we treat the coefficient in front of $\nrm[L^2(Q_\tau)]{G}^2$ as a function of $K$, then elementary calculus shows the minimum value of the coefficient is in between $3.5$ and $4$, i.e.,
	\begin{equation*}
		3.5 < \min_{K > 0} \{2\frac {K e^{(T_2 - \tau)/K}} {T_2 - \tau} - \frac 3 2 \frac K {T_2 - \tau} \} < 4, \ \ \text{provided} \ \ \tau < T_2.
	\end{equation*}
	Hence, by choosing the value of $K$ according to $T_2$ and $\tau$ properly, \eqref{eq:0g031-HR21} can be improved to
	\begin{equation*}
	\mathcal E_\tau(u)
	\leq 2e(\tau) + 3 \nrm[L^2(\Sigma_\tau)]{f_t} \nrm[L^2(\Sigma_\tau)]{u_{\nu,A}} + 4 (T_2 - \tau) \nrm[L^2(Q_\tau)]{G}^2,
	\end{equation*}
	which is the conclusion.
\end{proof}

\begin{Remark} \label{rem:pie-HR21}
	The piecewise smoothness of $S$ does not cause any trouble to the proof of Lemma \ref{lem:Bdd-HR21}.
	This is because the integration operation does not require the integrand to be smooth everywhere.
\end{Remark}

The RHS of the inequality in Lemma \ref{lem:Bdd-HR21} involves a norm of $u_{\nu,A}$,
and to achieve an {\it a-prior} estimate of $u$ on $\Gamma_S$, we also need to estimate the term $\nrm[L^2(\Sigma_0)]{u_{\nu,A}}$.
This can be done by playing with \eqref{eq:uu3-HR21}.

\begin{Lemma} \label{lem:tdlv-HR21}
	Under the same condition as in Proposition \ref{prop:Bddd-HR21},
	for vector-valued function $\vec \varphi \in C^1(\overline \Omega, \Rn)$ satisfying $\vec{\varphi}(x) = A(x) \nu_\Sigma(x)$ on $\partial \Omega$, we have
	\begin{align*}
	\nrm[L^2(\Sigma_\tau)]{u_{\nu,A}}^2
	& \leq  C \nrm[H^1(\Sigma_\tau)]{f}^2 + C (\sup_{\Omega} |\vec \varphi|) \big[ \nrm[L^2(\Omega)]{\nabla u(\cdot, \tau)}^2 + \nrm[L^2(\Omega)]{u_t(\cdot, \tau)}^2 \big] \nonumber \\
	& \quad + C [T_2 - \tau + (\sup_{\Omega} |\vec \varphi|)] \big[ (T_2 - \tau) \nrm[L^2(Q_\tau)]{G}^2 + \mathcal E_\tau(u) \big] \nonumber \\
	& \quad + C [T_2 - \tau + (\sup_{\Omega} |\vec \varphi|)]^2 \nrm[L^2(\Sigma_\tau)]{f_t}^2,
	\end{align*}
	for some constant $C$ depending only on $A$ and $n$.
\end{Lemma}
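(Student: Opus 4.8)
The plan is to run the Rellich--Morawetz multiplier argument based on the identity \eqref{eq:uu3-HR21}, applied with the prescribed field $\vec\varphi$ (which satisfies $\vec\varphi = A\nu_\Sigma$ on $\partial\Omega$). Since $u$ solves \eqref{eq:1-HR21}, the left-hand side of \eqref{eq:uu3-HR21} equals $2\Re\{(\vec\varphi\cdot\nabla\overline u)G\}$, so integrating over $Q_\tau$ and applying the divergence theorem to the $\divr_{x,t}$ term gives
\[
\int_{Q_\tau}2\Re\{(\vec\varphi\cdot\nabla\overline u)G\}\dif x\dif t = I_{H_\tau} + I_{\Sigma_\tau} + I_{\Gamma_{S,\tau}} + \int_{Q_\tau}R,
\]
where $R$ is the zeroth/first order volume remainder in \eqref{eq:uu3-HR21} and $I_\bullet$ are the three boundary contributions over $\partial Q_\tau = H_\tau\cup\Sigma_\tau\cup\Gamma_{S,\tau}$. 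The whole proof reduces to extracting $\nrm[L^2(\Sigma_\tau)]{u_{\nu,A}}^2$ from $I_{\Sigma_\tau}$ and bounding every other term.

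The decisive computation is on $\Sigma_\tau$, whose outer normal is purely spatial, $(\nu_\Sigma,0)$. Using $\vec\varphi = A\nu_\Sigma$ one checks $\nu_\Sigma\cdot\vec\varphi = |\nu_\Sigma|_A^2$ and $\vec\varphi\cdot\nabla\overline u = \overline{u_{\nu,A}}$, so that $I_{\Sigma_\tau} = \int_{\Sigma_\tau}[\,|\nu_\Sigma|_A^2(|\nabla u|_A^2 - |u_t|^2) - 2|u_{\nu,A}|^2\,]\dif\sigma$. Here I would invoke Lemma \ref{lem:nd-HR21}, $|\nu_\Sigma|_A^2|\nabla u|_A^2\leq|u_{\nu,A}|^2 + C|u_{\nu,A}|\,|\nabla_\Sigma u| + C|\nabla_\Sigma u|^2$; because the coefficient of $|u_{\nu,A}|^2$ there is \emph{exactly} $1$, it combines with the $-2|u_{\nu,A}|^2$ to leave a net $-\int_{\Sigma_\tau}|u_{\nu,A}|^2$, which is precisely the quantity we want to isolate. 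Since $u|_\Sigma = f$, the tangential derivative satisfies $\nabla_\Sigma u = \nabla_\Sigma f$ and $u_t = f_t$, so the leftover terms are controlled by $\nrm[H^1(\Sigma_\tau)]{f}$, and the cross term $|u_{\nu,A}|\,|\nabla_\Sigma f|$ is split by Young's inequality so that a small multiple of $\int_{\Sigma_\tau}|u_{\nu,A}|^2$ can be absorbed later.

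The term $I_{H_\tau}$ (normal $(\vec 0,-1)$) equals $-\int_{H_\tau}2\Re\{(\vec\varphi\cdot\nabla\overline u)u_t\}$ and is bounded by $C\sup_\Omega|\vec\varphi|\,[\nrm[L^2(\Omega)]{\nabla u(\cdot,\tau)}^2 + \nrm[L^2(\Omega)]{u_t(\cdot,\tau)}^2]$, while $\int_{Q_\tau}R$ is bounded by $C\int_\tau^{T_2}e(s)\dif s$ with $e$ as in \eqref{eq:0gh-HR21} and the constant depending on $\nabla\vec\varphi$, $\nabla A$ and $n$. The main obstacle is $I_{\Gamma_{S,\tau}}$. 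On $\Gamma_{S,\tau}$ the surface measure cancels the denominator in \eqref{eq:nu-HR21}, leaving, over $\pi(\Gamma_{S,\tau})$, the integrand
\[
J := -(\nabla S\cdot\vec\varphi)(|\nabla u|_A^2 - |u_t|^2) + 2\Re\{(\vec\varphi\cdot\nabla\overline u)(\nabla S\cdot A\nabla u + u_t)\}.
\]
A naive term-by-term bound fails in the lightlike case, where the energy density degenerates and no longer controls $|\nabla u|_A^2 + |u_t|^2$. The resolution I would use is to pass to the restriction gradient $v := \nabla u + u_t\nabla S$, in terms of which the energy density reads $|v|_A^2 + (1-|\nabla S|_A^2)|u_t|^2$; substituting $\nabla u = v - u_t\nabla S$ into $J$, the awkward cross terms coupling $u_t$ to $v$ through $\nabla S\cdot Av$ cancel exactly, and the residual cross term carries a compensating factor $1-|\nabla S|_A^2$. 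What remains is a quadratic form in $(v,u_t)$ that, using $|\nabla S|_A\leq 1$ from Assumption \ref{asp:nS-HR21} and ellipticity, is pointwise bounded by $C\sup_\Omega|\vec\varphi|$ times the energy density, whence $I_{\Gamma_{S,\tau}}\leq C\sup_\Omega|\vec\varphi|\,\mathcal E_\tau(u)$.

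Finally I would close the estimate by feeding the energy bound \eqref{eq:0g12-HR21} into $\int_\tau^{T_2}e(s)\dif s$, which produces $\mathcal E_\tau(u)$, the source $\nrm[L^2(Q_\tau)]{G}^2$, and a residual boundary integral $\int_{\Sigma_\tau}|f_t\,u_{\nu,A}|$; the latter, together with the cross term already produced on $\Sigma_\tau$, is handled by Young's inequality with parameters scaled by $(T_2-\tau)$ so that all occurrences of $\int_{\Sigma_\tau}|u_{\nu,A}|^2$ on the right (noting $\Sigma_s\subset\Sigma_\tau$ for $s\geq\tau$) carry total coefficient strictly below $1$ and can be absorbed into the left-hand side. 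Collecting the coefficients then reproduces the claimed combinations of $(T_2-\tau)$ and $\sup_\Omega|\vec\varphi|$. The one genuinely delicate point is the quadratic-form bound for $J$ in the degenerate (lightlike) regime; the rest is bookkeeping with Cauchy--Schwarz, Young's inequality, and the Grönwall-type estimate \eqref{eq:0g12-HR21}.
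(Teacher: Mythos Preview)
Your proposal is correct and follows essentially the same route as the paper: integrate \eqref{eq:uu3-HR21} over $Q_\tau$, use Lemma~\ref{lem:nd-HR21} on $\Sigma_\tau$ to extract a negative multiple of $\nrm[L^2(\Sigma_\tau)]{u_{\nu,A}}^2$, rewrite the $\Gamma_{S,\tau}$ integrand in terms of the restriction gradient $v=\nabla u+u_t\nabla S$ so that it is controlled by the energy density (the paper carries this out explicitly in \eqref{eq:igI21-HR21}--\eqref{eq:igI2-HR21}), bound the volume remainder via $\int_\tau^{T_2}e(s)\dif s$ and \eqref{eq:0g12-HR21}, and absorb with Young's inequality. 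The only step you leave implicit is the estimate of the source term $\int_{Q_\tau}(\vec\varphi\cdot\nabla\overline u)G$ itself, which the paper treats separately in \eqref{eq:igI4-HR21} but which fits into your $\int e(s)\dif s$ machinery in the obvious way.
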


\begin{proof}
	\begin{align*}
		2 \Re \{\overline{u}_t [\partial_t^2 u - & \nabla \cdot (A(x) \nabla u) + qu] \} \nonumber \\
		= & \, \Re \divr_{x,t} \big[ -2 \overline{u}_t A(x) \nabla u, |u_t|^2 + |\nabla u|_A^2 + q(x)|u|^2 \big], \\
		2 \Re \{ (\vec \varphi \cdot \nabla \overline u) & [\partial_t^2 u - \nabla \cdot (A(x) \nabla u) + qu] \} \nonumber \\
		= & \, \Re \divr_{x,t} \big[ \vec \varphi (|\nabla u|_A^2 - |u_t|^2 + q|u|^2) - 2(\vec \varphi \cdot \nabla \overline u) A\nabla u, 2 (\vec \varphi \cdot \nabla \overline u) u_t \big] \nonumber \\
		& - ( \nabla \cdot \vec \varphi) (|\nabla u|_A^2 - |u_t|^2 + q|u|^2) - (\vec \varphi \cdot \nabla A)(\nabla u, \nabla u) \nonumber \\
		& + 2 (\partial_j \varphi_k) \Re (\overline u_k a_{jl} u_l) - (\nabla q \cdot \vec \varphi) |u|^2,
	\end{align*}
	Integrating \eqref{eq:uu3-HR21} in $Q_\tau$, we have
	\begin{align}
	& 2 \Re \int_{Q_\tau} (\vec \varphi \cdot \nabla \overline u) G
	= 2 \Re \int_{Q_\tau} (\vec \varphi \cdot \nabla \overline u) [\partial_t^2 u - \nabla \cdot (A(x) \nabla u) + qu] \nonumber \\
	= & \Re \int_{Q_\tau} \divr_{x,t} \big[ \vec \varphi (|\nabla u|_A^2 - |u_t|^2 + q|u|^2) - 2(\vec \varphi \cdot \nabla \overline u) A\nabla u, 2 (\vec \varphi \cdot \nabla \overline u) u_t \big] \nonumber \\
	& + \int_{Q_\tau} [ (\nabla \cdot \vec \varphi) (|u_t|^2 - |\nabla u|_A^2 - q|u|^2) - (\vec \varphi \cdot \nabla A)(\nabla u, \nabla u)] \nonumber \\
	& + \int_{Q_\tau} [2 (\partial_j \varphi_k) \Re (\overline u_k a_{jl} u_l) - (\nabla q \cdot \vec \varphi) |u|^2] \nonumber \\
	= & \Re \int_{\Sigma_\tau} \big[ (\nu_\Sigma \cdot \vec \varphi) (|\nabla u|_A^2 - |u_t|^2 + q|u|^2) - 2(\vec \varphi \cdot \nabla \overline u) u_{\nu,A} \big] \nonumber \\
	& + \Re \int_{\Gamma_{S,\tau}} \big[ (\nu_x \cdot \vec \varphi) (|\nabla u|_A^2 - |u_t|^2 + q|u|^2) + 2(\vec \varphi \cdot \nabla \overline u) (\nu_t u_t - \nu_x \cdot A\nabla u) \big] \nonumber \\
	& - \Re \int_{\Omega \times \{\tau\}} 2 (\vec \varphi \cdot \nabla \overline u) u_t \nonumber \\
	& + \int_{Q_\tau} [ (\nabla \cdot \vec \varphi) (|u_t|^2 - |\nabla u|_A^2 - q|u|^2) - (\vec \varphi \cdot \nabla A)(\nabla u, \nabla u)] \nonumber \\
	& + \int_{Q_\tau} [2 (\partial_j \varphi_k) \Re (\overline u_k a_{jl} u_l) - (\nabla q \cdot \vec \varphi) |u|^2] \nonumber \\
	=: & \, I_1 + I_2 + I_3 - \Re \int_{\Omega \times \{\tau\}} 2 (\vec \varphi \cdot \nabla \overline u) u_t, \label{eq:igI-HR21}
	\end{align}
	where $I_1$ and $I_2$ represent the integrals on $\Sigma_\tau$ and $\Gamma_{S,\tau}$, respectively, and $I_3$ represents the two integrals $Q_\tau$.
	We estimate $I_1$, $I_2$ and $I_3$ separately.

	Recall that $\vec \varphi(x) = A(x) \nu_\Sigma(x)$ on $\partial \Omega$,
	so $I_1$ can be simplified as
	\begin{equation} \label{eq:igI11-HR21}
		I_1 = \int_{\Sigma_\tau} \big[ |\nu_\Sigma|_A^2 (|\nabla u|_A^2 - |u_t|^2 + q|u|^2) - 2|u_{\nu,A}|^2 \big] \dif \sigma.
	\end{equation}
	By Lemma \ref{lem:nd-HR21} we can obtain
	\begin{equation} \label{eq:igI12-HR21}
	|\nu_\Sigma|_A^2 |\nabla u|_A^2
	\leq \frac 3 2 |u_{\nu,A}|^2 +  C |\nabla_\Sigma u|^2 \ \text{on} \ \Sigma,
	\end{equation}
	for some constant $C$ depending only on $n$ and $c_1$, $c_2$ in \eqref{eq:ellp-HR21}, where $\nabla_\Sigma u$ represents the tangential gradient of $u$ on $\Sigma$.
	Substituting \eqref{eq:igI12-HR21} into \eqref{eq:igI11-HR21}, we can continue
	\begin{align}
	I_1
	& \leq \int_{\Sigma_\tau} \big[ (\frac 3 2 |u_{\nu,A}|^2 + C |\nabla_\Sigma u|^2 + |\nu_\Sigma|_A^2 |u_t|^2 + |\nu_\Sigma|_A^2 q|u|^2) - 2|u_{\nu,A}|^2 \big] \dif \sigma \nonumber \\
	& \leq -\frac 1 2 \nrm[L^2(\Sigma_\tau)]{u_{\nu,A}}^2 + C \nrm[H^1(\Sigma_\tau)]{u}^2, \label{eq:igI1-HR21}
	\end{align}
	for some constant $C$ depending on $c_1$, $c_2$, $n$.

	The integral $I_2$ is given on $\Gamma_S$, and on $\Gamma_S$ we have
	\[
	\nabla u(x,S(x)) = \nabla \big( u(x,S(x)) \big) - \nabla S(x) u_t(x,S(x)).
	\]
	For simplicity we abbreviate $\nabla \big( u(x,S(x)) \big)$ as $\nabla \big( u \big)$.
	Then we compute
	\begin{align}
	& \, (\nu_x \cdot \vec \varphi) (|\nabla u|_A^2 - |u_t|^2 + q|u|^2) \nonumber \\
	= & \, (\nu_x \cdot \vec \varphi) \big[ |\nabla \big( u \big) - \nabla S u_t|_A^2 - |u_t|^2 + q|u|^2 \big] \nonumber \\
	= & \, (\nu_x \cdot \vec \varphi) \big[ |\nabla \big( u \big)|_A^2 - 2\Re \{ \nabla S \overline u_t \cdot A \nabla \big( u \big) \} - (1 - |\nabla S|_A^2) |u_t|^2 + q|u|^2 \big], \label{eq:igI21-HR21}
	\end{align}
	and
	\begin{align}
	& \, 2(\vec \varphi \cdot \nabla \overline u) (\nu_t u_t - \nu_x \cdot A\nabla u) \times (1+|\nabla S|^2)^{1/2} \nonumber \\
	= & \, 2[\vec \varphi \cdot \nabla \big( \overline u \big) - \vec \varphi \cdot \nabla S \overline u_t] [\nu_t u_t - \nu_x \cdot A \nabla \big( u \big) + \nu_x \cdot A \nabla S u_t] \times (1+|\nabla S|^2)^{1/2} \nonumber \\
	= & \, 2[\vec \varphi \cdot \nabla \big( \overline u \big) - \vec \varphi \cdot \nabla S \overline u_t] [u_t + \nabla S \cdot A \nabla \big( u \big) - |\nabla S|_A^2 u_t] \qquad (\text{recall \eqref{eq:nu-HR21}}) \nonumber \\
	= & \, 2\vec \varphi \cdot \nabla \big( \overline u \big) (1 - |\nabla S|_A^2) u_t - 2 (\vec \varphi \cdot \nabla S) (1 - |\nabla S|_A^2) |u_t|^2 \nonumber \\
	& \, + 2\vec \varphi \cdot \nabla \big( \overline u \big) \nabla S \cdot A \nabla \big( u \big) - 2(\vec \varphi \cdot \nabla S) \overline u_t \nabla S \cdot A \nabla \big( u \big). \label{eq:igI22-HR21}
	\end{align}
	Combining \eqref{eq:igI21-HR21} with \eqref{eq:igI22-HR21}, we obtain
	\begin{align}
	& \Re \big[ (\nu_x \cdot \vec \varphi) (|\nabla u|_A^2 - |u_t|^2 + q|u|^2) + 2(\vec \varphi \cdot \nabla \overline u) (\nu_t u_t - \nu_x \cdot A\nabla u) \big] \times (1+|\nabla S|^2)^{1/2} \nonumber \\
	%
	%
	= & \, -(\nabla S \cdot \vec \varphi) \big[ |\nabla \big( u \big)|_A^2 + (1 - |\nabla S|_A^2) |u_t|^2 + q|u|^2 \big] \nonumber \\
	& \, + 2(\nabla S \cdot \vec \varphi) \Re \{ \nabla S \overline u_t \cdot A \nabla \big( u \big) \} \nonumber \\
	& \, + 2\Re\{ [\vec \varphi \cdot \nabla \big( \overline u \big)] (1 - |\nabla S|_A^2) u_t + \nabla S \cdot A \nabla \big( u \big) \} \nonumber \\
	& \, - 2(\vec \varphi \cdot \nabla S) \Re \{\overline u_t \nabla S \cdot A \nabla \big( u \big)\} \nonumber \\
	= & -(\vec \varphi \cdot \nabla S) \big[ |\nabla \big( u \big)|_A^2 + (1 - |\nabla S|_A^2) |u_t|^2 + q|u|^2 \big] \nonumber \\
	& + 2\Re \{ [\vec \varphi \cdot \nabla \big( \overline u \big)] [\nabla S \cdot A \nabla \big( u \big) + (1 - |\nabla S|_A^2) u_t] \}. \label{eq:igI23-HR21}
	\end{align}
	With the help of \eqref{eq:igI23-HR21}, we can estimate $I_2$ in the following way,
	\begin{align}
	|I_2|
	& = \Big| \Re \int_{\Gamma_{S,\tau}} \big[ (\nu_x \cdot \vec \varphi) (|\nabla u|_A^2 - |u_t|^2 + q|u|^2) + 2(\vec \varphi \cdot \nabla \overline u) (\nu_t u_t - \nu_x \cdot A\nabla u) \big] \dif \sigma \Big| \nonumber \\
	& \leq \int_{\Gamma_{S,\tau}} |\vec \varphi \cdot \nabla S| \big[ |\nabla \big( u \big)|_A^2 + (1 - |\nabla S|_A^2) |u_t|^2 + q|u|^2 \big] \frac {\dif \sigma} {\sqrt{1 + |\nabla S|^2}} \nonumber \\
	& \quad + 2 \int_{\Gamma_{S,\tau}} |\vec \varphi \cdot \nabla \big( \overline u \big)| \big[ |\nabla S \cdot A \nabla \big( u \big)| + (1 - |\nabla S|_A^2) |u_t| \big] \frac {\dif \sigma} {\sqrt{1 + |\nabla S|^2}} \nonumber \\
	& \leq (c_1^{-1/2} + 2) (\sup_{\Omega} |\vec \varphi|) \int_{\Gamma_{S,\tau}} \big[ |\nabla \big( u \big)|_A^2 + (1 - |\nabla S|_A^2) |u_t|^2 + q|u|^2 \big] \frac {\dif \sigma} {\sqrt{1 + |\nabla S|^2}} \nonumber \\
	& = 2(c_1^{-1/2} + 2) (\sup_{\Omega} |\vec \varphi|) \mathcal E_\tau(u). \label{eq:igI2-HR21}
	\end{align}

	For $I_3$, we have
	\begin{align*}
	|I_3|
	& \leq \int_{Q_\tau} |(\nabla \cdot \vec \varphi) (|u_t|^2 - |\nabla u|_A^2 - q|u|^2) - (\vec \varphi \cdot \nabla A)(\nabla u, \nabla u)| \nonumber \\
	& \quad + \int_{Q_\tau} |2 (\partial_j \varphi_k) \Re (\overline u_k a_{jl} u_l) - (\nabla q \cdot \vec \varphi) |u|^2| \nonumber \\
	& \leq C \int_{Q_\tau}  (|\nabla u|_A^2 + |u_t|^2 + q|u|^2) \dif x \dif t
	= C \int_\tau^{T_2} e(s) \dif s,
	\end{align*}
	for some constant $C$ depending only on $q$, $\vec \varphi$, $A$ and $n$, and the $e(s)$ is defined in \eqref{eq:0gh-HR21}.
	Hence by \eqref{eq:0g12-HR21} which requires Assumption \ref{asp:nS-HR21}, we can have
	\begin{align}
	|I_3|
	& \leq C \int_\tau^{T_2} e^{(T_2 - s)/K} \big[ K \nrm[L^2(Q_s)]{G}^2 + \mathcal E_s(u) + 2 \int_{\Sigma_s}  |u_t u_{\nu,A}| \dif \sigma \big] \dif s \nonumber \\
	& \leq C \int_\tau^{T_2} e^{(T_2 - s)/K} \dif s \cdot \big[ K \nrm[L^2(Q_\tau)]{G}^2 + \mathcal E_\tau(u) + 2 \int_{\Sigma_\tau}  |u_t u_{\nu,A}| \dif \sigma \big] \nonumber \\
	& \leq CK (e^{(T_2 - \tau)/K} - 1) \big[ K \nrm[L^2(Q_\tau)]{G}^2 + \mathcal E_\tau(u) + \frac 1 \epsilon \nrm[L^2(\Sigma_\tau)]{u_t}^2 + \epsilon \nrm[L^2(\Sigma_\tau)]{u_{\nu, A}}^2 \big]. \label{eq:igI3-HR21}
	\end{align}
	In \eqref{eq:igI-HR21}, there is a term $\Re \int_{Q_\tau} (\vec \varphi \cdot \nabla \overline u) G$, and similar to the estimation of $I_3$, we can also estimate this integral as follows,
	\begin{align}
	& \, |\Re \int_{Q_\tau} (\vec \varphi \cdot \nabla \overline u) G|
	\leq \frac 1 2 (\sup_{\Omega} |\vec \varphi|) \int_{Q_\tau} (c_1^{-1} K^{-1} |\nabla u|_A^2 + K |G|^2) \nonumber \\
	\leq & \, \frac 1 {c_1} (\sup_{\Omega} |\vec \varphi|) \frac 1 K \int_\tau^{T_2} e(s) \dif s + \frac 1 2 (\sup_{\Omega} |\vec \varphi|) K \nrm[L^2(Q_\tau)]{G}^2 \nonumber \\
	\leq & \, \frac 1 {c_1} (\sup_{\Omega} |\vec \varphi|) (e^{(T_2 - \tau)/K} - 1) \big[ K \nrm[L^2(Q_\tau)]{G}^2 + \mathcal E_\tau(u) + \frac 1 \epsilon \nrm[L^2(\Sigma_\tau)]{u_t}^2 + \epsilon \nrm[L^2(\Sigma_\tau)]{u_{\nu, A}}^2 \big] \nonumber \\
	& +  \frac 1 2 (\sup_{\Omega} |\vec \varphi|) K \nrm[L^2(Q_\tau)]{G}^2 \nonumber \\
	\leq & \, (\sup_{\Omega} |\vec \varphi|) \cdot K (e^{(T_2 - \tau)/K} - 1 + \frac {c_1} 2) \cdot \nrm[L^2(Q_\tau)]{G}^2 \nonumber \\
	& + \frac 1 {c_1} (\sup_{\Omega} |\vec \varphi|) (e^{(T_2 - \tau)/K} - 1) \big[ \mathcal E_\tau(u) + \frac 1 \epsilon \nrm[L^2(\Sigma_\tau)]{u_t}^2 + \epsilon \nrm[L^2(\Sigma_\tau)]{u_{\nu, A}}^2 \big]. \label{eq:igI4-HR21}
	\end{align}
	Combining \eqref{eq:igI3-HR21} and \eqref{eq:igI4-HR21} and setting $K = T_2 - \tau$, we have
	\begin{align}
	& |I_3| + |\Re \int_{Q_\tau} (\vec \varphi \cdot \nabla \overline u) G| \nonumber \\
	\leq & C [T_2 - \tau + (\sup_{\Omega} |\vec \varphi|)] \big[ (T_2 - \tau) \nrm[L^2(Q_\tau)]{G}^2 + \mathcal E_\tau(u) + \frac 1 \epsilon \nrm[L^2(\Sigma_\tau)]{u_t}^2 + \epsilon \nrm[L^2(\Sigma_\tau)]{u_{\nu, A}}^2 \big], \label{eq:igI34-HR21}
	\end{align}
	for some constant $C$ depending only on $\sup_\Omega |\vec \varphi|$, $A$ and $n$.
	It seems the constant $C$ also rely on $\sup_\Omega |\vec \varphi|$.
	However, because the only requirement on $\vec \varphi$ is ``$\vec \varphi(x) = A(x) \nu_\Sigma(x)$ on $\partial \Omega$'', we can choose $\vec \varphi$ such that $\sup_\Omega |\vec \varphi| \leq 2 \sup_{\partial \Omega} |\vec \varphi|$.
	By doing this, we can have
	\[
	\sup_\Omega |\vec \varphi|
	\leq 2 \sup_{\partial \Omega} |\vec \varphi|
	= 2 \sup_{\partial \Omega} |A(x) \nu_\Sigma(x)|
	\leq 2 \sup_{\partial \Omega} |A|
	\leq 2 \sup_{\Rn} |A|.
	\]
	Therefore, it is enough only claim $C$ depends on $A$ and $n$.
	This also implies the results in this work is also valid when $\Omega$ is unbounded or when $\Omega = \Rn$.
	In what follows we will not emphasize this unless otherwise is necessary.
	
	Now, by combining \eqref{eq:igI1-HR21}, \eqref{eq:igI2-HR21} and \eqref{eq:igI34-HR21} with \eqref{eq:igI-HR21}, we obtain
	\begin{align}
	& \frac 1 2 \nrm[L^2(\Sigma_\tau)]{u_{\nu,A}}^2 \nonumber \\
	\leq & \, C \nrm[H^1(\Sigma_\tau)]{u}^2 + |I_2| + 2 |\int_{Q_\tau} (\vec \varphi \cdot \nabla \overline u) G| + |I_3| + |\int_{\Omega \times \{\tau\}} 2 (\vec \varphi \cdot \nabla \overline u) u_t| \nonumber \\
	\leq & \, C \nrm[H^1(\Sigma_\tau)]{u}^2 + 2(c_1^{-1/2} + 2) (\sup_{\Omega} |\vec \varphi|) \mathcal E_\tau(u) + (\sup_{\Omega} |\vec \varphi|) \big[ \nrm[L^2(\Omega)]{\nabla u(\cdot, \tau)}^2 + \nrm[L^2(\Omega)]{u_t(\cdot, \tau)}^2 \big] \nonumber \\
	& + C [T_2 - \tau + (\sup_{\Omega} |\vec \varphi|)] \big[ (T_2 - \tau) \nrm[L^2(Q_\tau)]{G}^2 + \mathcal E_\tau(u) + \frac 1 \epsilon \nrm[L^2(\Sigma_\tau)]{u_t}^2 + \epsilon \nrm[L^2(\Sigma_\tau)]{u_{\nu, A}}^2 \big]. \nonumber
	\end{align}
	By setting $\epsilon = \{ 4C [T_2 - \tau + (\sup_{\Omega} |\vec \varphi|)] \}^{-1}$, we arrive at
	\begin{align*}
	\frac 1 4 \nrm[L^2(\Sigma_\tau)]{u_{\nu,A}}^2
	& \leq  C \nrm[H^1(\Sigma_\tau)]{u}^2 + C (\sup_{\Omega} |\vec \varphi|) \big[ \nrm[L^2(\Omega)]{\nabla u(\cdot, \tau)}^2 + \nrm[L^2(\Omega)]{u_t(\cdot, \tau)}^2 \big] \nonumber \\
	& \quad + C [T_2 - \tau + (\sup_{\Omega} |\vec \varphi|)] \big[ (T_2 - \tau) \nrm[L^2(Q_\tau)]{G}^2 + \mathcal E_\tau(u) \big] \nonumber \\
	& \quad + C [T_2 - \tau + (\sup_{\Omega} |\vec \varphi|)]^2 \nrm[L^2(\Sigma_\tau)]{u_t}^2.
	\end{align*}
	The proof is done.
\end{proof}

With the help of Lemmas \ref{lem:Bdd-HR21} and \ref{lem:tdlv-HR21}, we are able to bound $\mathcal E(u)$ and $\nrm[L^2(\Sigma_0)]{u_{\nu,A}}$ by the initial/boundary data and the source term.

\begin{Lemma} \label{lem:Bddd-HR21}
	Under the same condition as in Proposition \ref{prop:Bddd-HR21}, we have
	\begin{align*}
	\mathcal E(u)
	& \leq C \mathcal E(u,0) + C \agl[T_2] (\nrm[L^2(Q_0)]{G}^2 + \nrm[H^1(\Sigma_0)]{f}^2), \\
	\nrm[L^2(\Sigma_0)]{u_{\nu,A}}^2
	& \leq C \agl[T_2] \mathcal E(u,0) + C \agl[T_2]^2 (\nrm[H^1(\Sigma_0)]{f}^2 + \nrm[L^2(Q_0)]{G}^2),
	\end{align*}
	for some constant $C$ depending only on $A$, $q$ and the dimension $n$.
\end{Lemma}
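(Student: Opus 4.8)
The plan is to evaluate Lemmas \ref{lem:Bdd-HR21} and \ref{lem:tdlv-HR21} at $\tau = 0$ and then decouple the two resulting inequalities. First I would record that at $\tau = 0$ one has $H_0 = \Omega \times \{0\}$, so that $u(\cdot,0) = u_0$ and $u_t(\cdot,0) = u_1$ on $H_0$; moreover, since $0 \le T_1$, we get $\pi(\Gamma_{S,0}) = \Omega$ and hence $\mathcal E_0(u) = \mathcal E(u)$. Invoking Assumption \ref{asp:A-HR21} to replace $|\nabla u_0|_A^2$ by a multiple of $|\nabla u_0|^2$, Lemma \ref{lem:Bdd-HR21} becomes
\[
\mathcal E(u) \le C(\nrm[L^2(\Omega)]{\nabla u_0}^2 + \nrm[L^2(\Omega)]{u_1}^2) + 6\, \nrm[L^2(\Sigma_0)]{f_t}\, \nrm[L^2(\Sigma_0)]{u_{\nu,A}} + 8 T_2 \nrm[L^2(Q_0)]{G}^2.
\]

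Next I would fix once and for all a vector field $\vec\varphi \in C^1(\overline\Omega; \Rn)$ with $\vec\varphi = A\nu_\Sigma$ on $\partial\Omega$ (any smooth extension of $A\nu_\Sigma$ suffices), so that $\sup_\Omega |\vec\varphi|$ is a constant depending only on $A$, $\Omega$, $n$ and may be absorbed into $C$. Setting $\tau = 0$ in Lemma \ref{lem:tdlv-HR21} and using the elementary bounds $T_2 + \sup_\Omega|\vec\varphi| \le C\agl[T_2]$ and $\nrm[L^2(\Sigma_0)]{f_t} \le \nrm[H^1(\Sigma_0)]{f}$, I obtain
\[
\nrm[L^2(\Sigma_0)]{u_{\nu,A}}^2 \le C(\nrm[L^2(\Omega)]{\nabla u_0}^2 + \nrm[L^2(\Omega)]{u_1}^2) + C\agl[T_2]^2(\nrm[H^1(\Sigma_0)]{f}^2 + \nrm[L^2(Q_0)]{G}^2) + C\agl[T_2]\, \mathcal E(u).
\]

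These two displays form a coupled system in the unknowns $\mathcal E(u)$ and $\nrm[L^2(\Sigma_0)]{u_{\nu,A}}$, and the decoupling is the crux. I would insert the second into the first: taking a square root with $\sqrt{a+b} \le \sqrt a + \sqrt b$ splits $\nrm[L^2(\Sigma_0)]{u_{\nu,A}}$ into several pieces, and the cross term $6\,\nrm[L^2(\Sigma_0)]{f_t}\,\nrm[L^2(\Sigma_0)]{u_{\nu,A}}$ then produces one dangerous contribution of the form $C\agl[T_2]^{1/2}\nrm[H^1(\Sigma_0)]{f}\, \mathcal E(u)^{1/2}$, the remaining contributions carrying at most one power of $\agl[T_2]$. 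A Young inequality applied to this single term bounds it by $\frac 1 2 \mathcal E(u) + C\agl[T_2]\nrm[H^1(\Sigma_0)]{f}^2$; absorbing $\frac 1 2 \mathcal E(u)$ into the left-hand side yields the first claimed estimate. Substituting that bound for $\mathcal E(u)$ back into the $u_{\nu,A}$ inequality, the factor $\agl[T_2]$ multiplying $\mathcal E(u)$ raises the initial-data power to $\agl[T_2]$ and the $f$, $G$ powers to $\agl[T_2]^2$, giving the second claimed estimate exactly.

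The main obstacle is the bookkeeping of the powers of $\agl[T_2]$ through the decoupling. The Young inequality must be applied only to the one term generating $\mathcal E(u)^{1/2}$, and with a constant chosen to release precisely $\frac 1 2 \mathcal E(u)$, so that the energy bound picks up exactly one power of $\agl[T_2]$ and never two. It is this asymmetry — one power of $\agl[T_2]$ on the data for $\mathcal E(u)$, but two for $\nrm[L^2(\Sigma_0)]{u_{\nu,A}}^2$ — that the final estimates encode and that must be preserved by the algebra.
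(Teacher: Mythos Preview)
Your proposal is correct and follows essentially the same route as the paper: both set $\tau=0$ in Lemmas~\ref{lem:Bdd-HR21} and~\ref{lem:tdlv-HR21}, fix an extension $\vec\varphi$ of $A\nu_\Sigma$, and then decouple the resulting pair of inequalities by absorbing the feedback term. The only cosmetic differences are that the paper applies Young's inequality with a parameter $\epsilon\sim\agl[T_2]^{-1}$ to the cross term $6\nrm[L^2(\Sigma_0)]{f_t}\nrm[L^2(\Sigma_0)]{u_{\nu,A}}$ \emph{before} substituting (rather than taking a square root after), and for the second estimate it re-substitutes Lemma~\ref{lem:Bdd-HR21} into Lemma~\ref{lem:tdlv-HR21} directly (rather than feeding in the already-proved bound for $\mathcal E(u)$); both variants yield the same powers of $\agl[T_2]$.
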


\begin{proof}
	Using the inequality in Lemma \ref{lem:Bdd-HR21} with $\tau = 0$, and noting that $u(\cdot,0) = u_0$ and $u_t(\cdot,0) = u_1$, we have
	\begin{equation} \label{eq:Huf1-HR21}
	\mathcal E(u)
	\leq \mathcal E(u,0) + \frac 9 {4\epsilon} \nrm[L^2(\Sigma_0)]{f_t}^2 + \epsilon \nrm[L^2(\Sigma_0)]{u_{\nu,A}}^2 + 4 T_2 \nrm[L^2(Q_0)]{G}^2.
	\end{equation}
	Substituting the inequality in Lemma \ref{lem:tdlv-HR21} with $\tau = 0$ into \eqref{eq:Huf1-HR21},
	and setting $\epsilon$ in \eqref{eq:Huf1-HR21} to be $\{2C[T_2 + (\sup_{\Omega} |\vec \varphi|)]\}^{-1}$, we obtain
	\begin{equation} \label{eq:Huf-HR21}
	\mathcal E(u)
	\leq C \mathcal E(u,0) + C [T_2 + (\sup_{\Omega} |\vec \varphi|) + 1] (\nrm[H^1(\Sigma_0)]{f}^2 + \nrm[L^2(Q_0)]{G}^2),
	\end{equation}
	for some constant $C$ depending only on $q$, $\vec \varphi$, $A$ and $n$.
	Recall that in Lemma \ref{lem:tdlv-HR21} we have fixed the value of $\vec \varphi$ on $\partial \Omega$.
	But we still have the freedom to choose the value of $\vec \varphi$ in the interior of $\Omega$.
	We can choose $\vec \varphi$ in such a way that $\sup_\Omega |\vec \varphi| \leq 2 \sup_{\partial \Omega} |\vec \varphi|$, and this choice guarantees
	\[
	\sup_\Omega |\vec \varphi| \leq 2 \sup_{\partial \Omega} |A \cdot \nu_\Sigma| \leq 2 c_2,
	\]
	for the constant $c_2$ given in \eqref{eq:ellp-HR21}.
	Combining this with \eqref{eq:Huf-HR21}, we arrive at the first inequality of the lemma.

	For the second inequality, we substitute the inequality in Lemma \ref{lem:Bdd-HR21} into the inequality in Lemma \ref{lem:tdlv-HR21} with $\tau = 0$, and this gives
	\begin{align*}
		\nrm[L^2(\Sigma_0)]{u_{\nu,A}}^2
		& \leq  C [T_2 + (\sup_{\Omega} |\vec \varphi|)] \mathcal E(u,0) \nonumber \\
		& \quad + C [T_2 + (\sup_{\Omega} |\vec \varphi|)]^2 \nrm[H^1(\Sigma_0)]{f}^2 + C [T_2 + (\sup_{\Omega} |\vec \varphi|)] T_2 \nrm[L^2(Q_0)]{G}^2 \nonumber \\
		& \quad + C [T_2 + (\sup_{\Omega} |\vec \varphi|)] [ \frac 1 \epsilon \nrm[L^2(\Sigma_0)]{f_t}^2 + \epsilon \nrm[L^2(\Sigma_0)]{u_{\nu,A}}^2 + T_2 \nrm[L^2(Q_0)]{G}^2 ].
	\end{align*}
	By letting $\epsilon = \{2C [T_2 + (\sup_{\Omega} |\vec \varphi|)]\}^{-1}$ and absorbing the $\nrm[L^2(\Sigma_0)]{u_{\nu,A}}^2$-term on the RHS by the LHS, we obtain
	\begin{align*}
	\nrm[L^2(\Sigma_0)]{u_{\nu,A}}^2
	& \leq  C [T_2 + (\sup_{\Omega} |\vec \varphi|)] \mathcal E(u,0) + C [T_2 + (\sup_{\Omega} |\vec \varphi|)]^2 (\nrm[H^1(\Sigma_0)]{f}^2 + \nrm[L^2(Q_0)]{G}^2).
	\end{align*}
	Again, by using $\sup_\Omega |\vec \varphi| \leq 2 c_2$, we obtain the second inequality.
	The proof is complete.
\end{proof}

\begin{Remark} \label{rem:unA-HR21}
	When the hypersurface $\Gamma_S$ is horizontal, the corresponding estimate of the Neumann data $u_{\nu,A}$ given in Lemma \ref{lem:Bddd-HR21} will be a quantitative version of \cite[eq.~(2.7)]{llt1986non}.
\end{Remark}

Now we are ready to prove Proposition \ref{prop:Bddd-HR21}.

\begin{proof}[Proof of Proposition \ref{prop:Bddd-HR21}]
	In \eqref{eq:0g0-HR21}, when $\tau = 0$, we have
	\begin{align}
	|\mathcal E(u) - \mathcal E(u,0)|
	& = |\Re \int_{\Sigma_0} \overline{f}_t u_{\nu,A} \dif \sigma + \Re \int_{Q_0} \overline{u}_t G| \nonumber \\
	& \leq \nrm[L^2(\Sigma_0)]{f_t} \nrm[L^2(\Sigma_0)]{u_{\nu,A}} + \nrm[L^2(Q_0)]{u_t} \nrm[L^2(Q_0)]{G}. \label{eq:eu01-HR21}
	\end{align}
	For $\nrm[L^2(Q_0)]{u_t}$, noticing that $\nrm[L^2(H_\tau)]{u_t}^2 \leq e(\tau)$, so by \eqref{eq:0g12-HR21} with $K$ set to be $T_2$, we have
	\begin{align*}
	\frac 1 2 \nrm[L^2(Q_0)]{u_t}^2
	& = \int_0^{T_2} \frac 1 2 \nrm[L^2(H_\tau)]{u_t}^2 \dif \tau
	\leq \int_0^{T_2} e(\tau) \dif \tau \\
	& \leq T_2 (e^{T_2/T_2} - 1) (\mathcal E(u) + \frac {T_2} 2 \nrm[L^2(Q_0)]{G}^2 + \int_{\Sigma_0}  |u_t u_{\nu,A}| \dif \sigma) \\
	& \leq C T_2 \big( \mathcal E(u) + \frac {T_2} 2 \nrm[L^2(Q_0)]{G}^2 + \nrm[L^2(\Sigma_0)]{f_t} \nrm[L^2(\Sigma_0)]{u_{\nu,A}} \big).
	\end{align*}
	Combining this with the estimates of $\mathcal E(u)$ and $\nrm[L^2(\Sigma_0)]{u_{\nu,A}}$ given in Lemma \ref{lem:Bddd-HR21}, we obtain
	\begin{align}
	\nrm[L^2(Q_0)]{u_t}^2
	& \leq C T_2 \mathcal E(u,0) + C (T_2+1)^2 \big( \nrm[L^2(Q_0)]{G}^2 + \nrm[H^1(\Sigma_0)]{f}^2 \big) \nonumber \\
	& \quad + C T_2^2 \nrm[L^2(Q_0)]{G}^2 + C T_2^2 \nrm[L^2(\Sigma_0)]{f_t}^2 + C (T_2 + 1) \mathcal E(u,0) \nonumber \\
	& \quad + C (T_2 + 1)^2 (\nrm[H^1(\Sigma_0)]{f}^2 + \nrm[L^2(Q_0)]{G}^2) \nonumber \\
	& \leq C \agl[T_2] \mathcal E(u,0) + C \agl[T_2]^2 \big( \nrm[L^2(Q_0)]{G}^2 + \nrm[H^1(\Sigma_0)]{f}^2 \big), \label{eq:eu02-HR21}
	\end{align}
	for some constant $C$ depending only on $A$ and the dimension $n$.
	Substituting \eqref{eq:eu02-HR21} into \eqref{eq:eu01-HR21}, we obtain
	\begin{align*}
	|\mathcal E(u) - \mathcal E(u,0)|
	& \leq \nrm[L^2(\Sigma_0)]{f_t} \nrm[L^2(\Sigma_0)]{u_{\nu,A}} + C \agl[T_2]^{1/2} \sqrt{\mathcal E(u,0)} \nrm[L^2(Q_0)]{G} \nonumber \\
	& \quad + C\agl[T_2] (\nrm[L^2(Q_0)]{G} + \nrm[H^1(\Sigma_0)]{f}) \nrm[L^2(Q_0)]{G}.
	\end{align*}
	Again, by the estimate of $\nrm[L^2(\Sigma_0)]{u_{\nu,A}}$ given in Lemma \ref{lem:Bddd-HR21}, we arrive at
	\begin{align*}
	|\mathcal E(u) - \mathcal E(u,0)|
	& \leq C \agl[T_2]^{1/2} \big[ \sqrt{\mathcal E(u,0)} + \agl[T_2]^{1/2} (\nrm[H^1(\Sigma_0)]{f} + \nrm[L^2(Q_0)]{G}) \big] \nonumber \\
	& \quad \times (\nrm[H^1(\Sigma_0)]{f} + \nrm[L^2(Q_0)]{G})
	\end{align*}
	The proof is complete.
\end{proof}

\section{Approximation of the solution} \label{sec:app-HR21}

The results in Section \ref{sec:est-HR21} are based on the prerequisite $u \in C^2$.
This is not true for the system \eqref{eq:1-HR21} when:
\begin{enumerate}
	\item compatibility issue: only the compatibility condition up to order zero is satisfied;
	
	\item regularity issue: the initial/boundary data are merely $H^1$.
\end{enumerate}
However, these two issues can all be overcome by approximation.
That is to say, we can find smooth sequences $u_{0,\epsilon}$, $u_{1,\epsilon}$, $f_\epsilon$, $G_\epsilon$ which converge to $u_0$, $u_1$, $f$, $G$, respectively,
and under these approximate data, we obtain approximate solutions $u_\epsilon$, which will converge in $H^1(\Gamma_S)$.
For the compatibility conditions issue, we shall modify $f_{\epsilon}$.
And for the regularity issue, with the help of the estimate given in Lemma \ref{lem:Bddd-HR21}, we are able to approximate the system \eqref{eq:1-HR21} with smooth enough initial/boundary data and to show the corresponding approximate solution has a limit in $H^1(\Gamma_S)$ and the corresponding energy is well-defined.

\subsection{Perturbation of the potential}

As mentioned before, when the potential $q$ is zero, the classical regularity result of $u$ in horizontal hyperplanes $\Omega \times \{t = \tau\}$ has been given in \cite{llt1986non}
For readers convenience we reproduce \cite[Remark 2.10]{llt1986non} below.

\begin{Lemma} \label{lem:LLT210-HR21}
	Assume $A$ is a $C^1$-smooth real-valued symmetric matrix function, and $\partial \Omega$ is $C^2$.
	Let $\tilde u(x,t)$ be a solution of the system
	\begin{equation*}
		\left\{\begin{aligned}
			\partial_t^2 \tilde u - \nabla \cdot (A(x) \nabla \tilde u) & = G && \text{in} \ Q, \\
			\tilde u & = f && \text{on} \ \Sigma, \\
			\tilde u(\cdot,0) = u_0,\,
			\partial_t \tilde u & = u_1 && \text{on} \ \Omega \times \{t = 0\},
		\end{aligned}\right.
	\end{equation*}
	with $(G, f, u_0, u_1)$ satisfying the regularity assumptions ($m$ is a non-negative integer)
	\begin{equation*}
		\left\{\begin{aligned}
		& G \in L^1(0,T; H^m(\Omega)), \ \partial_t^m G \in L^1(0,T; L^2(\Omega)), \\
		& f \in H^{m+1}(\Sigma) := L^2(0,T; H^{m+1}(\partial \Omega)) \cap H^{m+1}(0,T; L^2(\partial \Omega)), \\
		& u_0 \in H^{m+1}(\Omega), \ u_1 \in H^{m}(\Omega),
		\end{aligned}\right.
	\end{equation*}
	and satisfying all necessary compatibility conditions up to order $m$.
	Then
	\begin{equation*}
		\partial_k^k u_0 \in C([0,T]; H^{m+1-k}(\Omega)) \ \ \text{for} \ \ 0 \leq k \leq  m+1, \quad \text{and} \ \
		\partial_\nu u_0 \in H^m(\Sigma).
	\end{equation*}
\end{Lemma}


\begin{Remark} \label{rem:LLT210-HR21}
	For the case of Lemma \ref{lem:LLT210-HR21} where $\nabla \cdot (A(x) \nabla)$ is replaced by $\Delta$, the corresponding result is covered by \cite[Remark 2.10]{llt1986non}.
	Moreover, in \cite[Section 4]{llt1986non}, the authors discussed how to generalize from $\Delta$ to \mbox{$\nabla \cdot (A(x) \nabla)$} when $m = 0,1$.
	Actually, by following the same steps in the proof of \cite[Theorem 2.2]{llt1986non}, we can generalize the scenario to any integer $m$ not only $0$ and $1$, and the proof is straightforward so we omit it.
\end{Remark}

\begin{Remark}
	Readers should note that, in this work, the function space for the source $G$ is set to be $L^2(Q)$, which is a subset of the space $L^1(0,T; H^m(\Omega))$ with $m = 0$ used in \cite[Remark 2.10]{llt1986non}.
\end{Remark}

In our case \eqref{eq:1-HR21}-\eqref{eq:1con-HR21}, however, we have a zero order perturbation $q$, so Lemma \ref{lem:LLT210-HR21} cannot directly apply.
But under certain smoothness condition of $q$,
we shall show here that $q$ does not affect the regularity result of the solution.
To that end, we divide \eqref{eq:1-HR21}-\eqref{eq:1con-HR21} into the following two PDEs for $v$ and $w$ respectively, and we see $u = v + w$,
\begin{equation} \label{eq:utv-HR21}
	\left\{\begin{aligned}
		\partial_t^2 v - \nabla \cdot (A(x) \nabla v) & = G && \text{in} \ Q, \\
		v & = f && \text{on} \ \Sigma, \\
		v = u_0,\,
		\partial_t v & = u_1 && \text{on} \ \Omega \times \{t = 0\},
	\end{aligned}\right.
\end{equation}
and
\begin{equation} \label{eq:utw-HR21}
	\left\{\begin{aligned}
		\partial_t^2 w - \nabla \cdot (A(x) \nabla w) + qw & = -q(x) v(x,t) && \text{in} \ Q, \\
		w & = 0 && \text{on} \ \Sigma, \\
		w = 0,\,
		\partial_t w & = 0 && \text{on} \ \Omega \times \{t = 0\}.
	\end{aligned}\right.
\end{equation}
Applying Lemma \ref{lem:LLT210-HR21} to \eqref{eq:utv-HR21} gives
\[
\partial_t^k v \in C([0,T]; H^{m+1-k}(\Omega)) \ \ \text{for} \ \ 0 \leq k \leq  m+1, \quad \text{and} \ \
\partial_\nu v \in H^m(\Sigma).
\]
Because $q \in C_c^{m+1}(\Omega)$, we have
\[
\partial_t^k (qv) \in C([0,T]; H^{m+1-k}(\Omega)) \ \ \text{for} \ \ 0 \leq k \leq  m+1, \quad \text{and} \ \
\partial_\nu (qv) = 0 \ \text{on} \ \Sigma
\]
as well.
$qv$ is the source term of \eqref{eq:utw-HR21}, so by \cite[\S 7.2.3 Theorem 6]{EvanPDEs} to $w$, we see
\[
\partial_t^k w \in L^\infty(0,T; H^{m+1-k}(\Omega)) \ \ \text{for} \ \ 0 \leq k \leq  m+1, \quad \text{and} \ \ \partial_\nu w \in H^{m-1}(\Sigma).
\]
then by applying \cite[\S 7.2.3 Theorem 6]{EvanPDEs} the time-$L^\infty$ can be transformed into the $C^k$-norm with the total regularity drop by one,
\[
\partial_t^k w \in C([0,T]; H^{m-k}(\Omega)) \ \ \text{for} \ \ 0 \leq k \leq  m, \quad \text{and} \ \
\partial_\nu w \in H^{m-1}(\Sigma).
\]
Summing up $v+w$, we obtain
\[
\partial_t^k u \in C([0,T]; H^{m-k}(\Omega)) \ \ \text{for} \ \ 0 \leq k \leq  m, \quad \text{and} \ \
\partial_\nu u \in H^{m-1}(\Sigma).
\]
We have proved the following claim.

\begin{Proposition} \label{prop:LLTg-HR21}
	Under the same assumptions as in Lemma \ref{lem:LLT210-HR21}, but with an additional term $q\tilde u$ in the equation, with $q \in C_c^{m+1}(\overline \Omega)$, we have
	\begin{equation*}
		\partial_k^k u_0 \in C([0,T]; H^{m-k}(\Omega)) \ \ \text{for} \ \ 0 \leq k \leq  m, \quad \text{and} \ \
		\partial_\nu u_0 \in H^{m-1}(\Sigma).
	\end{equation*}
\end{Proposition}

\subsection{The compatibility issue}

We can remove the smoothness condition ``$u \in C^2(\overline \Omega)$'' in Lemma \ref{lem:Bddd-HR21} which is implicitly stipulated due to Proposition \ref{prop:Bddd-HR21}.

\begin{Lemma} \label{lem:1ep-HR21}
	Given Assumption \ref{asp:nS-HR21}.
	In system \eqref{eq:1-HR21}, assume $q \in C_c^{\lceil n/2 \rceil + 3}(\overline \Omega)$, $f \in C^\infty(\overline \Sigma)$, $G \in C^\infty(\overline Q)$, $u_0$, $u_1 \in C^\infty(\overline \Omega)$ with
	\(
	u_0(x) = f(x,0) \ \text{on} \ \partial \Omega.
	\)
	Then we have $u \in H^1(\Gamma_S)$ and
	\begin{align*}
		|\mathcal E(u) - \mathcal E(u,0)|
		& \leq C \agl[T_2]^{1/2} \big[ \sqrt{\mathcal E(u,0)} + \agl[T_2]^{1/2} (\nrm[H^1(\Sigma_0)]{f} + \nrm[L^2(Q_0)]{G}) \big] \nonumber \\
		& \quad \times (\nrm[H^1(\Sigma_0)]{f} + \nrm[L^2(Q_0)]{G}),
	\end{align*}
	and
	\begin{equation*}
		\nrm[L^2(\Sigma_0)]{u_{\nu,A}}^2
		\leq C \agl[T_2] \mathcal E(u,0) + C \agl[T_2]^2 (\nrm[L^2(Q_0)]{G}^2 + \nrm[H^1(\Sigma_0)]{f}^2).
	\end{equation*}
\end{Lemma}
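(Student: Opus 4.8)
The plan is to reduce to the $C^2$ framework of Proposition \ref{prop:Bddd-HR21} by repairing the missing compatibility conditions. Although $(u_0,u_1,f,G)$ are all smooth, only the zeroth order condition $u_0=f(\cdot,0)$ is imposed, so Lemma \ref{lem:LLT210-HR21} (with $m=0$) guarantees no more than $u\in C([0,T];H^1(\Omega))$ and Proposition \ref{prop:Bddd-HR21} cannot be applied to $u$ directly. I would therefore perturb only the boundary datum, keeping $u_0,u_1,G$ fixed, so as to enforce all compatibility conditions up to a large order $m$ and thereby lift the corresponding solution to $C^2(\overline Q)$.

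\textbf{Construction of the corrected datum.} Let $g_k:=\big(\partial_t^k u(\cdot,0)\big)|_{\partial\Omega}$ be the boundary values dictated by the equation, computed from the smooth data by the recursion $g_0=u_0|_{\partial\Omega}$, $g_1=u_1|_{\partial\Omega}$ and $g_{k+2}=\big(\nabla\cdot(A\nabla\partial_t^k u(\cdot,0))+\partial_t^k G(\cdot,0)\big)|_{\partial\Omega}$; each $g_k\in C^\infty(\partial\Omega)$. Writing $d_k:=g_k-\partial_t^k f(\cdot,0)|_{\partial\Omega}$ for the compatibility defect of order $k$ (so $d_0=0$ by hypothesis), set
\[
f_\epsilon:=f+\sum_{k=1}^{m}d_k(x)\,\frac{t^k}{k!}\,\chi(t/\epsilon),
\]
with $\chi\in C^\infty([0,\infty))$ equal to $1$ near $0$ and vanishing for $t\ge1$. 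Since $\chi\equiv1$ near $t=0$, one has $\partial_t^jf_\epsilon(\cdot,0)=\partial_t^jf(\cdot,0)+d_j$ for $1\le j\le m$ and $f_\epsilon(\cdot,0)=u_0|_{\partial\Omega}$, so $(u_0,u_1,f_\epsilon,G)$ satisfies all compatibility conditions through order $m$ while preserving the zeroth order one. Taking $m$ large (depending only on $n$) and combining Lemma \ref{lem:LLT210-HR21} with the Sobolev embedding yields $u_\epsilon\in C^2(\overline Q)$, where $u_\epsilon$ solves \eqref{eq:1-HR21} with datum $f_\epsilon$.

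\textbf{Convergence and passage to the limit.} A term-by-term estimate gives $f_\epsilon\to f$ in $H^1(\Sigma)$, hence in $H^1(\Sigma_0)$: the $k$th correction contributes $O(\epsilon^{k+1/2})$ to the $L^2(0,T;H^1(\partial\Omega))$ norm and $O(\epsilon^{k-1/2})$ to the $\partial_t$-part of the $L^2(0,T;L^2(\partial\Omega))$ norm, both tending to $0$ for $k\ge1$. The difference $u_\epsilon-u_{\epsilon'}$ solves \eqref{eq:1-HR21}-\eqref{eq:1con-HR21} with data $(0,0,f_\epsilon-f_{\epsilon'},0)$ and is $C^2$, so Proposition \ref{prop:Bddd-HR21} together with \eqref{eq:H1H-HR21} shows $u_\epsilon|_{\Gamma_S}$ is Cauchy in $H^1(\Gamma_S)$, while the second estimate of Lemma \ref{lem:Bddd-HR21} shows $u_{\epsilon,\nu,A}$ is Cauchy in $L^2(\Sigma_0)$. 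By the continuity of the solution map (the existence result of \cite{llt1986non} and Lemma \ref{lem:LLT210-HR21} with $m=0$) one has $u_\epsilon\to u$ in $C([0,T];H^1(\Omega))$ and $\partial_t u_\epsilon\to\partial_t u$ in $C([0,T];L^2(\Omega))$, whence $u_\epsilon\to u$ in $H^1(Q)$ and the traces converge in $L^2(\Gamma_S)$; this identifies the $H^1(\Gamma_S)$ limit with $u|_{\Gamma_S}$, so $u\in H^1(\Gamma_S)$, and likewise identifies the $L^2(\Sigma_0)$ limit with $u_{\nu,A}$. Passing Proposition \ref{prop:Bddd-HR21} and Lemma \ref{lem:Bddd-HR21} to the limit, using $\nrm[H^1(\Sigma_0)]{f_\epsilon}\to\nrm[H^1(\Sigma_0)]{f}$ and Lemma \ref{lem:tri-HR21} (so that $\mathcal E(u_\epsilon-u)\to0$ forces $\mathcal E(u_\epsilon)\to\mathcal E(u)$), then gives the two claimed inequalities.

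\textbf{Main obstacle.} The crux is the simultaneous requirement on the correction: it must annihilate finitely many compatibility defects without disturbing the zeroth order one, and yet vanish in the exact norm $H^1(\Sigma_0)$ appearing on the right-hand side. The borderline case is the order-one term $d_1\,t\,\chi(t/\epsilon)$, whose time derivative has amplitude $O(1)$ but is supported on the shrinking interval $[0,\epsilon]$, so that its $L^2(0,T;L^2(\partial\Omega))$ norm is still only $O(\epsilon^{1/2})$; this is precisely what makes the approximation converge in the norm demanded by the estimates. A secondary subtlety is identifying the conormal limit, for which one uses that $u_{\nu,A}$ already belongs to $L^2(\Sigma)$ by Lemma \ref{lem:LLT210-HR21} with $m=0$ and depends continuously on $f\in H^1(\Sigma)$.
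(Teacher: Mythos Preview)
Your proof is correct and follows essentially the same approach as the paper: modify only the Dirichlet datum near $t=0$ to enforce higher-order compatibility, invoke Lemma~\ref{lem:LLT210-HR21} with Sobolev embedding to obtain $C^2$ approximants, apply Proposition~\ref{prop:Bddd-HR21} and Lemma~\ref{lem:Bddd-HR21}, and pass to the limit via Lemma~\ref{lem:tri-HR21} after checking $f_\epsilon\to f$ in $H^1(\Sigma)$. Your additive correction $f_\epsilon=f+\sum_{k\ge 1} d_k\,t^k/k!\,\chi(t/\epsilon)$ differs from the paper's replacement $f_\epsilon=\chi(t/\epsilon)\sum_{k\ge 0} u_k\,t^k/k!+(1-\chi(t/\epsilon))f$ only by a Taylor remainder of $f$ times $\chi(t/\epsilon)$; one small imprecision is that $g_k$ need not be $C^\infty$ since $A\in C^{n+4}(\overline\Omega)$ rather than $C^\infty$, so $m$ must be chosen finite (as in the paper, $m=\lceil n/2\rceil+2$ suffices), but this does not affect your argument.
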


\begin{proof}
	We shall find a sequence of $C^2$-smooth solutions $\{ u_\epsilon \}_{\epsilon > 0}$ of
	\begin{equation} \label{eq:1ep-HR21}
		\left\{\begin{aligned}
		\partial_t^2 u_\epsilon - \nabla \cdot (A(x) \nabla u_\epsilon) + qu_\epsilon & = G && \text{in} \ Q, \\
		u_\epsilon & = f_\epsilon && \text{on} \ \Sigma, \\
		u_\epsilon = u_0,\,
		\partial_t u_\epsilon & = u_1 && \text{on} \ \Omega \times \{t = 0\},
		\end{aligned}\right.
	\end{equation}
	by using the regularity result in Proposition \ref{prop:LLTg-HR21}.
	For this, we need smoothness of $u_0$, $u_1$, $f$ and $G$, which are already assumed in this lemma,
	and we also need certain higher order compatibility conditions to be satisfied on $\Omega \times \{t = 0\}$.
	To guarantee the compatibility conditions, we modify the Dirichlet boundary datum $f$.
	
	Let us construct a series of Dirichlet boundary data $\{ f_\epsilon \}_{\epsilon > 0}$ in the following way.
	First, we define $u_k$ in $\overline \Omega$ iteratively for $2 \leq k \leq \frac {K-1} 2$ using $u_0$ and $u_1$,
	\begin{equation*}
		u_k(x) := \partial_t^{k - 2} G(x,0) +\nabla \cdot (A(x) \nabla u_{k - 2}(x)), \quad x \in \overline \Omega.
	\end{equation*}
	The value of the integer $K$ shall be determined later.
	Recall that $u_0$, $u_1 \in C^\infty(\overline \Omega)$,
	and also recall the smoothness of $A$ stipulated at the beginning of the article, i.e.~$A \in C^{n+4}(\overline \Omega)$.
	This guarantees
	\begin{equation} \label{eq:uks-HR21}
		u_{2k},\, u_{2k+1} \in C^{n+5-2k}(\overline \Omega), \quad \forall 1 \leq k \leq \frac {K-1} {2}.
	\end{equation}
	Then, we fix a cutoff function $\chi \in C_c^\infty(\R)$ satisfying $\chi(t) = 1$ when $|t| \leq 1$ and $\chi(t) = 0$ when $|t| \geq 2$,
	and we set
	\begin{equation} \label{eq:uep-HR21}
		f_\epsilon(x,t)
		:= \chi(t/\epsilon) \sum_{k = 0}^{K} \frac {t^k} {k!} u_k(x) + (1 - \chi(t/\epsilon)) f(x,t), \quad \forall (x,t) \in \overline \Sigma.
	\end{equation}
	By \eqref{eq:uks-HR21} and \eqref{eq:uep-HR21}, $f_\epsilon \in C^{n+5-K}(\overline \Sigma)$.
	Now compatibility conditions up to order $K$ required by Proposition \ref{prop:LLTg-HR21}, i.e.,
	\begin{equation*}
		u_k = \partial_t^k f_\epsilon \text{~on~} \partial \Omega \times \{ t = 0 \}, \ \ \forall k:  0 \leq k \leq K,
	\end{equation*}
	are all satisfied, see also \cite[\S 7.2.3 eq.~(62)]{EvanPDEs}.
	Recall the condition $q \in C_c^{\lceil n/2 \rceil + 3}(\overline \Omega)$.
	When $\lceil n/2 \rceil + 2 \geq N := \min\{n+5-K, K\}$, we can use Proposition \ref{prop:LLTg-HR21} up to order $N$ to conclude that the corresponding solution $u_\epsilon$ satisfies
	\[
	u_\epsilon \in C([0,T]; H^N(\Omega)), \quad
	\partial_t u_\epsilon \in C([0,T]; H^{N-1}(\Omega)), \quad
	\partial_t^2 u_\epsilon \in C([0,T]; H^{N-2}(\Omega)).
	\]
	By the Sobolev embedding theorem we know $H^{N-2+j}(\Omega) \hookrightarrow C^{j}(\overline{\Omega})$ for $j=0,1,2$ when $N - 2 \geq n/2$, so we set $K = \lceil n/2 \rceil + 2$, and thus
	\begin{equation*}
		u_\epsilon \in C([0,T]; C^2(\overline{\Omega})), \quad
		\partial_t u_\epsilon \in C([0,T]; C^1(\overline{\Omega})), \quad
		\partial_t^2 u_\epsilon \in C([0,T]; C(\Omega)),
	\end{equation*}
	which implies
	\begin{equation} \label{eq:uec-HR21}
		u_\epsilon \in C^2(\overline Q).
	\end{equation}
	
	The $C^2$-smoothness of $u_\epsilon$ guarantees us to use Proposition \ref{prop:Bddd-HR21} to conclude
	\begin{align}
		|\mathcal E(u_\epsilon) & - \mathcal E(u,0)| \nonumber \\
		& \leq C \agl[T_2]^{1/2} \big[ \sqrt{\mathcal E(u,0)} + \agl[T_2]^{1/2} (\nrm[H^1(\Sigma_0)]{f_\epsilon} + \nrm[L^2(Q_0)]{G}) \big] \nonumber \\
		& \quad \times (\nrm[H^1(\Sigma_0)]{f_\epsilon} + \nrm[L^2(Q_0)]{G}), \label{eq:mHu-HR21}
	\end{align}
	for certain constant $C$ depending only on $A$ and $n$. Further, by the linearity of \eqref{eq:1ep-HR21} we obtain
	\begin{equation} \label{eq:cH12-HR21}
		\mathcal E(u_{\epsilon_1} - u_{\epsilon_2})
		\leq C \agl[T_2] \nrm[H^1(\Sigma_0)]{f_{\epsilon_1} - f_{\epsilon_2}}^2, \quad \forall \epsilon_1, \epsilon_2 > 0.
	\end{equation}

	Besides compatibility conditions, the construction \eqref{eq:uep-HR21} also guarantees $f_\epsilon \to f$ in $H^1(\Sigma)$.
	Indeed, it is straightforward to check that
	\begin{equation} \label{eq:ue1-HR21}
		\nrm[L^2(\Sigma)]{f - f_\epsilon} \to 0, \quad
		\nrm[L^2(\Sigma)]{\nabla|_{\Sigma,x}(f - f_\epsilon)} \to 0, \quad \text{as~} \epsilon \to 0^+,
	\end{equation}
	where $\nabla|_{\Sigma,x}$ stands for the spatial tangential gradient on $\Sigma$.
	Therefore, to show $f_\epsilon \to f$ in $H^1(\Sigma)$, it is left to show $\nrm[L^2(\Sigma)]{\partial_t(f - f_\epsilon)} \to 0$ as $\epsilon \to 0^+$.
	By \eqref{eq:uep-HR21} one can compute
	\begin{align*}
		\partial_t (f_\epsilon - f)(x,t)
		& = \frac 1 \epsilon \chi'(\frac t \epsilon) [\sum_{k = 0}^{K} \frac {t^k} {k!} u_k(x) - f(x,t)] +  \chi(\frac t \epsilon) [\sum_{k = 0}^{K-1} \frac {t^k} {k!} u_k(x) - f'(x,t)] \\
		& = \frac 1 \epsilon \chi'(\frac t \epsilon) [\sum_{k = 0}^{K} \frac {t^k} {k!} u_k(x) - u_0(x) - f'(x,\xi_t) t] +  \chi(\frac t \epsilon) [\sum_{k = 0}^{K-1} \frac {t^k} {k!} u_k - f'] \\
		& = \frac t \epsilon \chi'(\frac t \epsilon) [\sum_{k = 1}^{K} \frac {t^{k-1}} {k!} u_k(x) - f'(x,\xi_t)] +  \chi(\frac t \epsilon) [\sum_{k = 0}^{K-1} \frac {t^k} {k!} u_k(x) - f'(x,t)],
	\end{align*}
	where we used the compatibility condition
	``\(
	u_0(x) = f(x,0) \ \text{on} \ \partial \Omega
	\)''
	and Taylor's expansion with Lagrange remainder, and the $\xi_t$ comes from the Lagrange remainder.
	Note that for every $\epsilon > 0$, $\frac {(\cdot)} \epsilon \chi'(\frac \cdot \epsilon)$ and $\chi(\frac \cdot \epsilon)$ are uniformly bounded by $\max\{2 \sup|\chi'| ,1\}$ in $[0,T]$,
	so $|\partial_t (f_\epsilon - f)|^2$ is dominated by an integrable function $F$ given as follows,
	\[
	F(x,t) := \max\{2 \sup|\chi'| ,1\} ^2 \big( |\sum_{k = 1}^{K} \frac {t^{k-1}} {k!} u_k(x) - f'(x,\xi_t)| + |\sum_{k = 0}^{K-1} \frac {t^k} {k!} u_k(x) - f'(x,t)| \big)^2.
	\]
	Also, from \eqref{eq:uep-HR21} we see $f_\epsilon = f$ when $t \geq 2\epsilon$, so $\partial_t (f_\epsilon - f) = 0$ when $t \geq 2\epsilon$ and hence $\partial_t (f_\epsilon - f) \to 0$ almost everywhere in $\Sigma$ as $\epsilon \to 0^+$.
	Therefore, by Lebesgue's dominated convergence theorem, we obtain
	\begin{equation} \label{eq:ue2-HR21}
		\nrm[L^2(\Sigma)]{\partial_t (f - f_\epsilon)} \to 0, \quad \text{as~} \epsilon \to 0^+.
	\end{equation}
	Combining \eqref{eq:ue2-HR21} with \eqref{eq:ue1-HR21}, we arrive at
	\begin{equation} \label{eq:ue-HR21}
		\nrm[H^1(\Sigma)]{f - f_\epsilon} \to 0, \quad \text{as~} \epsilon \to 0^+.
	\end{equation}
	
	Now, combining \eqref{eq:ue-HR21} with \eqref{eq:cH12-HR21}, we see $\mathcal E(u_{\epsilon_1} - u_{\epsilon_2})$ goes to zero as $\epsilon_1, \epsilon_2 \to 0^+$,
	which implies $\{ \nabla_{\Gamma_S} u_\epsilon \}$ is a Cauchy sequence in $L^2(\Gamma_S)$ and $\{ \partial_t u_\epsilon \}$ is a Cauchy sequence in $L^2(\Gamma_S)$.
	We claim that ``$\{ \nabla_{\Gamma_S} u_\epsilon \}$ being a Cauchy sequence in $L^2(\Gamma_S)$'' is enough to conclude $\{ u_\epsilon \}$ is a Cauchy sequence in $H^1(\Gamma_S)$.
	This is due to the reason that every $u_\epsilon$ has the same trace on $\partial \Gamma_S$.
	Hence, we can define $u|_{\Gamma_S}$ as the limit of $u_\epsilon|_{\Gamma_S}$, i.e.,
	\[
	u|_{\Gamma_S} := \lim_{\epsilon \to 0^+} u_\epsilon |_{\Gamma_S},
	\]
	and the estimate \eqref{eq:cH12-HR21} implies the limit $u|_{\Gamma_S}$ is in $H^1({\Gamma_S})$.
	Moreover, the energy $\mathcal E(u)$ is well-defined and by Lemma \ref{lem:tri-HR21}  and we see $\mathcal E(u_\epsilon) \to \mathcal E(u)$ as $\epsilon \to 0^+$.
	Now, taking the limit of \eqref{eq:mHu-HR21}, we obtain
	\begin{align*}
		|\mathcal E(u) & - \mathcal E(u,0)| \nonumber \\
		& \leq C \agl[T_2]^{1/2} \big[ \sqrt{\mathcal E(u,0)} + \agl[T_2]^{1/2} (\nrm[H^1(\Sigma_0)]{f} + \nrm[L^2(Q_0)]{G}) \big] \nonumber \\
		& \quad \times (\nrm[H^1(\Sigma_0)]{f} + \nrm[L^2(Q_0)]{G}).
	\end{align*}
	
	For $\nrm[L^2(\Sigma_0)]{u_{\nu,A}}$, by combining arguments above with Lemma \ref{lem:Bddd-HR21}, we can see $u_{\nu,A}$ is also well-define on $\Sigma_0$ and
	\begin{equation*}
		\nrm[L^2(\Sigma_0)]{u_{\nu,A}}^2
		\leq C \agl[T_2] \mathcal E(u,0) + C \agl[T_2]^{2} \nrm[L^2(Q_0)]{G}^2 + \nrm[H^1(\Sigma_0)]{f}^2.
	\end{equation*}
	The proof is complete.
\end{proof}

\subsection{The regularity issue}

The $C^\infty$-smooth regularity requirements for $u_0$, $u_1$, $f$ and $G$ in Lemma \ref{lem:1ep-HR21} can be further improved.

\begin{Lemma} \label{lem:2ep-HR21}
	Under the assumptions as in Lemma \ref{lem:1ep-HR21}, but with the regularity conditions of $f$, $G$, $u_0$, $u_1$ replaced by \eqref{eq:1con-HR21}, there hold $u \in H^1(\Gamma_S)$ and
	\begin{align*}
		|\mathcal E(u) - \mathcal E(u,0)|
		& \leq C \agl[T_2]^{1/2} \big[ \sqrt{\mathcal E(u,0)} + \agl[T_2]^{1/2} (\nrm[H^1(\Sigma_0)]{f} + \nrm[L^2(Q_0)]{G}) \big] \nonumber \\
		& \quad \times (\nrm[H^1(\Sigma_0)]{f} + \nrm[L^2(Q_0)]{G}).
	\end{align*}
\end{Lemma}

\begin{proof}
	One can find four sequences $\{ u_{0,\epsilon} \}_{\epsilon > 0}$, $\{ u_{1,\epsilon} \}_{\epsilon > 0}$, $\{ \tilde f_{\epsilon} \}_{\epsilon > 0}$, $\{ G_{\epsilon} \}_{\epsilon > 0}$, satisfying the following requirements:
	\begin{equation} \label{eq:einS-HR21}
		\left\{\begin{aligned}
			\{ u_{0,\epsilon} \}_{\epsilon > 0} \subset C^\infty(\overline \Omega) & \text{~such that~} \nrm[H^1(\Omega)]{u_{0, \epsilon} - u_0} \leq \epsilon, \\
			\{ u_{1,\epsilon} \}_{\epsilon > 0} \subset C^\infty(\overline \Omega) & \text{~such that~} u_{1, \epsilon} \to u_1 \text{~in~} L^2(\Omega), \\
			\{ \tilde f_{\epsilon} \}_{\epsilon > 0} \subset C^\infty(\overline \Sigma) & \text{~such that~} \nrm[H^1(\Sigma)]{\tilde f_{\epsilon} - f} \leq \epsilon, \\
			\{ G_{\epsilon} \}_{\epsilon > 0} \subset C^\infty(\overline Q) & \text{~such that~} G_{\epsilon} \to G \text{~in~} L^2(Q).
		\end{aligned}\right.
	\end{equation}
	The condition ``$u_0(x) = f(x,0)$ on $\partial \Omega$'' mentioned in \eqref{eq:1con-HR21} does not guarantee $u_{0,\epsilon}(x) = \tilde f_\epsilon(x,0)$ on $\partial \Omega$.
	Hence we need to modify $\tilde f_\epsilon$ to $f_\epsilon$ so that $(u_{0,\epsilon}, u_{1,\epsilon}, f_\epsilon, G_\epsilon)$ meet the requirements of Lemma \ref{lem:1ep-HR21}.
	
	Similar to \eqref{eq:uep-HR21}, we fix a cutoff function $\chi \in C_c^\infty(\R)$ satisfying $\chi(t) = 1$ when $|t| \leq 1$ and $\chi(t) = 0$ when $|t| \geq 2$,
	and set
	\begin{equation} \label{eq:uep2-HR21}
		f_\epsilon(x,t)
		:= \chi(t/\epsilon) u_{0, \epsilon}(x) + (1 - \chi(t/\epsilon)) \tilde f_\epsilon(x,t), \quad \forall (x,t) \in \overline \Sigma.
	\end{equation}
	Then,
	\begin{equation} \label{eq:ufc-HR21}
		f_\epsilon \in C^\infty(\overline \Omega), \quad \text{and} \quad
		u_{0,\epsilon}(x) = f_\epsilon(x,0) \ \text{on} \ \partial \Omega.
	\end{equation}
	It can be seen that (see \eqref{eq:ue1-HR21})
	\begin{equation} \label{eq:uf1-HR21}
		\nrm[L^2(\Sigma)]{f - f_\epsilon} \to 0, \quad
		\nrm[L^2(\Sigma)]{\nabla|_{\Sigma,x}(f - f_\epsilon)} \to 0, \quad \text{as~} \epsilon \to 0^+,
	\end{equation}
	so, to guarantee $f_\epsilon \to f$ in $H^1(\Sigma)$, it is left to show $\nrm[L^2(\Sigma)]{\partial_t(f - f_\epsilon)} \to 0$ as $\epsilon \to 0^+$.
	By \eqref{eq:uep2-HR21} we have
	\begin{align}
		& \, \partial_t (f_\epsilon - f)(x,t)
		= \partial_t(\tilde f_\epsilon - f) - \chi(t/\epsilon) \tilde f_\epsilon' + \frac 1 \epsilon \chi'(t/\epsilon) (u_{0, \epsilon}(x) - \tilde f_\epsilon(x,t)) \nonumber \\
		= & \, \partial_t(\tilde f_\epsilon - f) - \chi(t/\epsilon) \tilde f_\epsilon' \nonumber \\
		& + \frac t \epsilon \chi'(t/\epsilon) \frac {[u_{0, \epsilon}(x) - u_0(x)] + [f(x,0) - \tilde f_\epsilon(x,0)] + [\tilde f_\epsilon(x,0)- \tilde f_\epsilon(x,t)]} t, \label{eq:fef-HR21}
	\end{align}
	where we used the compatibility condition $u_0(x) = f(x,0)$ on $\partial \Omega$.

	The function $\frac {(\cdot)} \epsilon \chi'(\frac {\cdot} \epsilon)$ is supported in the interval $[\epsilon, 2\epsilon]$ and is bounded by $2 \max|\chi'|$, and $\nrm[H^1(\Omega)]{u_{0, \epsilon} - u_0} \leq \epsilon$, so
	\begin{align}
		\int_\Sigma |\frac t \epsilon \chi'(\frac t \epsilon) \frac {u_{0, \epsilon}(x) - u_0(x)} t|^2
		& \leq C \int_\epsilon^{2\epsilon} \big( \int_{\partial \Omega} |\frac {u_{0, \epsilon}(x) - u_0(x)} \epsilon|^2 \big) \dif t \nonumber \\
		& = C \epsilon^{-1} \nrm[L^2(\partial \Omega)]{u_{0, \epsilon} - u_0}^2
		\leq C \epsilon^{-1} \nrm[H^1(\Omega)]{u_{0, \epsilon} - u_0}^2 \nonumber \\
		& \leq C \epsilon, \label{eq:fef1-HR21}
	\end{align}
	where we used the trace theorem.
	Similarly, we have
	\begin{align}
		\int_\Sigma |\frac t \epsilon \chi'(\frac t \epsilon) \frac {f(x,0) - \tilde f_\epsilon(x,0)} t|^2
		& \leq C \epsilon^{-1} \nrm[L^2(\partial \Omega)]{f - \tilde f_\epsilon}^2
		\leq C \epsilon^{-1} \nrm[H^1(\Sigma)]{f - \tilde f_\epsilon}^2
		\leq C \epsilon. \label{eq:fef2-HR21}
	\end{align}
	We can also estimate the last term in \eqref{eq:fef-HR21}.
	Note that $\tilde f_{\epsilon}$ is a smooth function, so we have
	\begin{align*}
		\int_\Sigma |\frac t \epsilon \chi'(\frac t \epsilon) \frac {\tilde f_\epsilon(x,0)- \tilde f_\epsilon(x,t)} t|^2 \dif \sigma
		& = \int_\Sigma |\frac t \epsilon \chi'(\frac t \epsilon)|^2 |\partial_t \tilde f_\epsilon(x,t) + \mathcal O(t)|^2 \dif \sigma \nonumber \\
		& \leq 2 \int_\Sigma |\frac t \epsilon \chi'(\frac t \epsilon)|^2 |\partial_t \tilde f_\epsilon(x,t)|^2 \dif \sigma + \mathcal O(\epsilon^2).
	\end{align*}
	The function $|\frac t \epsilon \chi'(\frac t \epsilon)|^2 |\partial_t \tilde f_\epsilon(x,t)|^2$ is dominated by $C |\partial_t \tilde f_\epsilon(x,t)|^2$ for certain constant $C$,
	whose Lebesgue integral in $\Sigma$ is bounded by $C \nrm[H^1(\Sigma)]{\tilde f_\epsilon}$, and hence bounded by $C \nrm[H^1(\Sigma)]{f}+1$ when $\epsilon$ is small enough.
	Also, the function $|\frac t \epsilon \chi'(\frac t \epsilon)|^2 |\partial_t \tilde f_\epsilon(x,t)|^2$ converges to $0$ for $\forall (x,t) \in \overline{\Sigma}$ as $\epsilon \to 0^+$,
	which is because $|\frac t \epsilon \chi'(\frac t \epsilon)|^2$ converges to $0$ for $\forall t \in [0,T]$ as $\epsilon \to 0^+$.
	Therefore, by Lebesgue's dominated convergence theorem we have
	\begin{equation} \label{eq:fef3-HR21}
		\lim_{\epsilon \to 0^+} \int_\Sigma |\frac t \epsilon \chi'(\frac t \epsilon) \frac {\tilde f_\epsilon(x,0)- \tilde f_\epsilon(x,t)} t|^2 \dif \sigma
		\leq 2 \lim_{\epsilon \to 0^+} \int_\Sigma |\frac t \epsilon \chi'(\frac t \epsilon)|^2 |\partial_t \tilde f_\epsilon(x,t)|^2 \dif \sigma = 0.
	\end{equation}
	Combining \eqref{eq:fef1-HR21}, \eqref{eq:fef2-HR21}, \eqref{eq:fef3-HR21} with \eqref{eq:fef-HR21}, we conclude $\nrm[L^2(\Sigma)]{\partial_t(f_\epsilon - f)} \to 0$, so
	\begin{equation} \label{eq:fe-HR21}
		\nrm[H^1(\Sigma)]{f_\epsilon - f} \to 0, \quad \epsilon \to 0^+.
	\end{equation}

	By \eqref{eq:einS-HR21} and \eqref{eq:ufc-HR21},
	we see $(u_{0,\epsilon}, u_{1,\epsilon}, f_\epsilon, G_\epsilon)$ meet the requirement of Lemma \ref{lem:1ep-HR21}, so the corresponding solution $u_\epsilon$ satisfies
	\begin{align}
		|\mathcal E(u_\epsilon) - \mathcal E(u_\epsilon,0)|
		& \leq C \agl[T_2]^{1/2} \big[ \sqrt{\mathcal E(u_\epsilon,0)} + \agl[T_2]^{1/2} (\nrm[H^1(\Sigma_0)]{f_\epsilon} + \nrm[L^2(Q_0)]{G_\epsilon}) \big] \nonumber \\
		& \quad \times (\nrm[H^1(\Sigma_0)]{f_\epsilon} + \nrm[L^2(Q_0)]{G_\epsilon}), \label{eq:fel2-HR21}
	\end{align}
	and
	\begin{align}
		& \, |\mathcal E(u_{\epsilon_1} - u_{\epsilon_2}) - \mathcal E(u_{\epsilon_1} - u_{\epsilon_2},0)| \nonumber \\
		\leq & \, C \agl[T_2]^{1/2} \big[ \sqrt{\mathcal E(u_{\epsilon_1} - u_{\epsilon_2},0)} + \agl[T_2]^{1/2} (\nrm[H^1(\Sigma_0)]{f_{\epsilon_1} - f_{\epsilon_2}}  \nonumber \\
		& \, + \nrm[L^2(Q_0)]{G_{\epsilon_1} - G_{\epsilon_2}}) \big]
		\times (\nrm[H^1(\Sigma_0)]{f_{\epsilon_1} - f_{\epsilon_2}} + \nrm[L^2(Q_0)]{G_{\epsilon_1} - G_{\epsilon_2}}). \label{eq:fel1-HR21}
	\end{align}
	Here $\mathcal E(u_\epsilon, 0)$ and $\mathcal E(u_{\epsilon_1} - u_{\epsilon_2},0)$ signifies
	\begin{align*}
	\mathcal E(u_\epsilon, 0)
	& = \nrm[L^2(\Omega)]{\nabla u_{0,\epsilon}}^2 + \nrm[L^2(\Omega)]{u_{1,\epsilon}}^2, \\
	\mathcal E(u_{\epsilon_1} - u_{\epsilon_2},0)
	& = \nrm[L^2(\Omega)]{\nabla (u_{0,\epsilon_1} - u_{0,\epsilon_2})}^2 + \nrm[L^2(\Omega)]{u_{1,\epsilon_1} - u_{1,\epsilon_2}}^2,
	\end{align*}
	respectively.
	Combining \eqref{eq:fel1-HR21} with these limits given in \eqref{eq:einS-HR21} and \eqref{eq:fe-HR21}, we see the limit of $\{u_\epsilon\}$ and $\{\partial_t u_\epsilon\}$ exist in $H^1(\Gamma_S)$ and in $L^2(\Sigma_S)$, respectively, and the limit coincides with the solution $u$.
	Therefore, by taking the limit of \eqref{eq:fel2-HR21}, we arrive at the conclusion.
\end{proof}

\section{Proofs of the main results} \label{sec:sqrt-HR21}

We are ready to prove Theorem \ref{thm:1H1-HR21}.

\begin{proof}[Proof of Theorem \ref{thm:1H1-HR21}]
	We introduce an auxiliary function $v$ which satisfies
	\begin{equation} \label{eq:v-HR21}
		\left\{\begin{aligned}
			\partial_t^2 v - \nabla \cdot (A(x) \nabla v) + qv & = 0 && \text{in} \ Q, \\
			u & = 0 && \text{on} \ \Sigma, \\
			v = u_0,\,
			v & = u_1 && \text{on} \ \Omega \times \{t = 0\},
		\end{aligned}\right.
	\end{equation}
	By Lemma \ref{lem:2ep-HR21}, we see $\mathcal E(v; \Gamma_S) = \mathcal E(v,0)$.
	And $\mathcal E(v,0) = \mathcal E(u,0)$.
	From \eqref{eq:tri1-HR21} we can see
	\[
	|\sqrt{\mathcal E(u_1)} - \sqrt{\mathcal E(u_2)}|
	\leq \sqrt{\mathcal E(u_1 - u_2)}.
	\]
	Hence,
	\begin{equation*}
		|\sqrt{\mathcal E(u; \Gamma_S)} - \sqrt{\mathcal E(u,0)}|
		= |\sqrt{\mathcal E(u; \Gamma_S)} - \sqrt{\mathcal E(v,\Gamma_S)}|
		\leq \sqrt{\mathcal E(u - v; \Gamma_S)}.
	\end{equation*}
	Note that the function $u-v$ satisfies the equation \eqref{eq:1-HR21} with initial condition $u_0 = u_1 = 0$, namely $\mathcal E(u-v,0) = 0$, so Lemma \ref{lem:2ep-HR21} gives
	\[
	\sqrt{\mathcal E(u-v; \Gamma_S)}
	\leq C \agl[T_2]^{1/2} (\nrm[H^1(\Sigma_0)]{f} + \nrm[L^2(Q_0)]{G}),
	\]
	thus,
	\begin{equation*}
		|\sqrt{\mathcal E(u; \Gamma_S)} - \sqrt{\mathcal E(u,0)}|
		\leq C \agl[T_2]^{1/2} (\nrm[H^1(\Sigma_0)]{f} + \nrm[L^2(Q_0)]{G}).
	\end{equation*}
	The proof is complete.
\end{proof}

\begin{proof}[Proof of Corollary \ref{cor:1en-HR21}]
	Recall the notation $\mathcal E(u; \Gamma_\tau)$ defined before Corollary \eqref{cor:1en-HR21}.
	When $f = 0$ and $G = 0$, by Theorem \ref{thm:1H1-HR21} we have
	\begin{equation*}
		\mathcal E(u; \Gamma_\tau)
		= \frac 1 2 \int_\Omega (|\nabla u_0|_A^2 + |u_1|^2 + q|u_0|^2) \dif x,
	\end{equation*}
	which is true for any $\tau$.
\end{proof}

We can borrow arguments from the proof of Theorem \ref{thm:1H1-HR21} to show Theorem \ref{thm:2-HR21}.

\begin{proof}[Proof of Theorem \ref{thm:2-HR21}]
	The sequences $(u_{0,\epsilon}, u_{1,\epsilon}, f_\epsilon, G_\epsilon)$ constructed in the proof of Theorem \ref{thm:1H1-HR21} meet the requirement of Lemma \ref{lem:1ep-HR21}, so the corresponding solution $u_\epsilon$ satisfies
	\begin{equation*}
	\nrm[L^2(\Sigma_0)]{u_{\epsilon,\nu,A}}^2
	\leq C \agl[T_2] \mathcal E(u_\epsilon, 0) + C \agl[T_2]^{2} (\nrm[L^2(Q_0)]{G_\epsilon}^2 + \nrm[H^1(\Sigma_0)]{f_\epsilon}^2).
	\end{equation*}
	By taking the limit, we obtain the result.
\end{proof}

\subsection*{Acknowledgements}

The research of the author is partially supported by the NSF of China under the grant No.~12301540.


{

\begin{bibdiv}
	\begin{biblist}
		
		\bib{Ava97sharp}{article}{
			author={Avalos, G.},
			title={Sharp regularity estimates for solutions of the wave equation and
				their traces with prescribed {N}eumann data},
			date={1997},
			ISSN={0095-4616},
			journal={Appl. Math. Optim.},
			volume={35},
			number={2},
			pages={203\ndash 219},
			url={https://doi.org/10.1007/s002459900045},
			review={\MR{1424790}},
		}
		
		\bib{EvanPDEs}{book}{
			author={Evans, Lawrence~C.},
			title={Partial differential equations},
			edition={Second},
			series={Graduate Studies in Mathematics},
			publisher={American Mathematical Society, Providence, RI},
			date={2010},
			volume={19},
			ISBN={978-0-8218-4974-3},
			url={https://doi.org/10.1090/gsm/019},
			review={\MR{2597943}},
		}
		
		\bib{llt1986non}{article}{
			author={Lasiecka, I.},
			author={Lions, J.-L.},
			author={Triggiani, R.},
			title={Nonhomogeneous boundary value problems for second order
				hyperbolic operators},
			date={1986},
			ISSN={0021-7824},
			journal={J. Math. Pures Appl. (9)},
			volume={65},
			number={2},
			pages={149\ndash 192},
			review={\MR{867669}},
		}
		
		\bib{Lions1972non}{book}{
			author={Lions, J.-L.},
			author={Magenes, E.},
			title={Non-homogeneous boundary value problems and applications. {V}ol.
				{I}--{III}},
			series={Die Grundlehren der mathematischen Wissenschaften, Band 181},
			publisher={Springer-Verlag, New York-Heidelberg},
			date={{1972--1973}},
			review={\MR{0350177}},
		}
		
		\bib{Las1981cos}{article}{
			author={Lasiecka, I.},
			author={Triggiani, R.},
			title={A cosine operator approach to modeling {$L_{2}(0,\,T;\
					L_{2}(\Gamma ))$}---boundary input hyperbolic equations},
			date={1981},
			ISSN={0095-4616},
			journal={Appl. Math. Optim.},
			volume={7},
			number={1},
			pages={35\ndash 93},
			url={https://doi.org/10.1007/BF01442108},
			review={\MR{600559}},
		}
		
		\bib{Las1983reg}{article}{
			author={Lasiecka, I.},
			author={Triggiani, R.},
			title={Regularity of hyperbolic equations under
				{$L_{2}(0,\,T;L_{2}(\Gamma ))$}-{D}irichlet boundary terms},
			date={1983},
			ISSN={0095-4616},
			journal={Appl. Math. Optim.},
			volume={10},
			number={3},
			pages={275\ndash 286},
			url={https://doi.org/10.1007/BF01448390},
			review={\MR{722491}},
		}
		
		\bib{las88lift}{article}{
			author={Lasiecka, I.},
			author={Triggiani, R.},
			title={A lifting theorem for the time regularity of solutions to
				abstract equations with unbounded operators and applications to hyperbolic
				equations},
			date={1988},
			ISSN={0002-9939},
			journal={Proc. Amer. Math. Soc.},
			volume={104},
			number={3},
			pages={745\ndash 755},
			url={https://doi.org/10.2307/2046785},
			review={\MR{964851}},
		}
		
		\bib{merono2020fixed}{article}{
			author={Mero\~no, Crist\'obal~J.},
			author={Potenciano-Machado, Leyter},
			author={Salo, Mikko},
			title={The fixed angle scattering problem with a first-order
				perturbation},
			date={2021},
			ISSN={1424-0637,1424-0661},
			journal={Ann. Henri Poincar\'e},
			volume={22},
			number={11},
			pages={3699\ndash 3746},
			url={https://doi.org/10.1007/s00023-021-01081-w},
			review={\MR{4325875}},
		}
		
		\bib{RakeshSalo2}{article}{
			author={Rakesh},
			author={Salo, Mikko},
			title={Fixed angle inverse scattering for almost symmetric or controlled
				perturbations},
			date={2020},
			ISSN={0036-1410},
			journal={SIAM J. Math. Anal.},
			volume={52},
			number={6},
			pages={5467\ndash 5499},
			url={https://doi.org/10.1137/20M1319309},
			review={\MR{4170189}},
		}
		
		\bib{RakeshSalo1}{article}{
			author={Rakesh},
			author={Salo, Mikko},
			title={The fixed angle scattering problem and wave equation inverse
				problems with two measurements},
			date={2020},
			ISSN={0266-5611},
			journal={Inverse Problems},
			volume={36},
			number={3},
			pages={035005, 42},
			url={https://doi.org/10.1088/1361-6420/ab23a2},
			review={\MR{4068234}},
		}
		
		\bib{RakeshUhlmann}{article}{
			author={Rakesh},
			author={Uhlmann, Gunther},
			title={Uniqueness for the inverse backscattering problem for angularly
				controlled potentials},
			date={2014},
			ISSN={0266-5611},
			journal={Inverse Problems},
			volume={30},
			number={6},
			pages={065005, 24},
			url={https://doi.org/10.1088/0266-5611/30/6/065005},
			review={\MR{3224125}},
		}
		
		\bib{Saka1970mixedI}{article}{
			author={Sakamoto, Reiko},
			title={Mixed problems for hyperbolic equations. {I}. {E}nergy
				inequalities},
			date={1970},
			ISSN={0023-608X},
			journal={J. Math. Kyoto Univ.},
			volume={10},
			pages={349\ndash 373},
			url={https://doi.org/10.1215/kjm/1250523767},
			review={\MR{283400}},
		}
		
		\bib{Saka1970mixedII}{article}{
			author={Sakamoto, Reiko},
			title={Mixed problems for hyperbolic equations. {II}. {E}xistence
				theorems with zero initial datas and energy inequalities with initial datas},
			date={1970},
			ISSN={0023-608X},
			journal={J. Math. Kyoto Univ.},
			volume={10},
			pages={403\ndash 417},
			url={https://doi.org/10.1215/kjm/1250523726},
			review={\MR{283401}},
		}
		
		\bib{wald84gen}{book}{
			author={Wald, Robert~M.},
			title={General relativity},
			publisher={University of Chicago Press, Chicago, IL},
			date={1984},
			ISBN={0-226-87032-4; 0-226-87033-2},
			url={https://doi.org/10.7208/chicago/9780226870373.001.0001},
			review={\MR{757180}},
		}
		
	\end{biblist}
\end{bibdiv}

}

\end{document}